\numberwithin{equation}{section}
\theoremstyle{plain}
\newtheorem{Th}{Theorem}[section]
\newtheorem{Lemma}[Th]{Lemma}
\newtheorem{Conj}[Th]{Conjecture}
\newtheorem{Prop}[Th]{Proposition}
\newtheorem{Def}[Th]{Definition}
 \theoremstyle{definition}
\newtheorem{?}[Th]{Problem}
\newtheorem{Ex}[Th]{Example}
\newcommand{\Q}{\mathbb{Q}}
\newcommand{\R}{\mathbb{R}}
\newcommand{\C}{\mathbb{C}}
\newcommand{\Z}{\mathbb{Z}}
\newcommand{\F}{\mathbb{F}}
\newcommand{\ord}{\mathrm{ord}}
\begin{document}

\title{Hasse Polynomials of L-functions of Certain Exponential Sums}

\author{Chao Chen}
\address{Department of Mathematics, University of California, Irvine, 
Irvine, CA 92697-3875, USA}
\email{chaoc12@uci.edu}

\subjclass[2010]{Primary 11S40, 11T23, 11L07}

\keywords{Laurent polynomials, Exponential sums, L-function, Newton polygon, Hodge polygon, Dwork's $p$-adic method,  Decomposition theory, Hasse polynomial}

\begin{abstract}In this paper, we focus on computing the higher slope Hasse polynomials of L-functions of certain exponential sums associated to the following family of Laurent polynomials $f(x_1,\ldots ,x_{n+1})=\sum_{i=1}^na_i x_{n+1}\left(x_i+\frac{1}{x_i}\right)+a_{n+1} x_{n+1}+\frac{1}{x_{n+1}}$, where $a_i \in \F^*_{q}$, $i=1,2, \ldots, n+1$. We find a simple formula for the Hasse polynomial of the slope one side and study the irreducibility of these Hasse polynomials. We will also provide a simple form of all the higher slope Hasse polynomials for $n=3$, answering an open question of Zhang and Feng.   
\end{abstract}

\maketitle

\section{Introduction} 
Let $\F_{q}$ be the finite field of $q$ elements with characteristic $p$. For each positive integer $k$, let $\mathbb{F}_{q^k}$ denote the degree $k$ finite extension of $\F_{q}$ and $\mathbb{F}_{q^k}^*$ denote the set of non-zero elements in $\F_{q}$. Assume that $\zeta_p$ is a fixed primitive $p$-th root of unity in $\C$. For any Laurent polynomial $f(x_1,\ldots, x_n)\in \F_{q}[x_1^{\pm1},\ldots, x_n^{\pm1}]$, the $k$-th exponential sum associated to $f$ is defined by,
$$S^*_k(f)=\sum_{x_i \in \F^*_{q^k}} \zeta^{ \mathrm{Tr_k}  f(x_1, \ldots, x_{n})}_p, \ \ \ k=1,2,3,\ldots$$
where $\mathrm{Tr_k}: \F_{q^k} \mapsto\F_{p}$ is the trace map. In analytic number theory, it's a classical problem to give good estimates for archimedean and non-archimedean sizes of $S^*_k(f)(1\leq k < \infty)$. To understand the sequence of exponential sums, we usually evaluate the reciprocal roots or poles of the associated generating L-functions given by,
$$L^*(f,T)=\exp  \left(\sum^\infty _ {k=1} S^*_k (f) \frac{T^k}{k}\right) \in \mathbb{Q}(\zeta_p)[[T]].$$ Deligne's theorem on the Riemann hypothesis provides general information about the complex valuations of the zeros and poles. Also it's well known that the roots and poles of $L^*(f,T)$ are $\ell$-adic units if $\ell$ is a prime distinct from $p$. So the remaining interesting part is the $p$-adic valuation of $L^*(f,T).$ 

In this paper, we study $p$-adic estimate for the following family of Laurent polynomials over $\mathbb{F}_q$, denoted by $\mathcal{F}$:
$$f(x_1,x_2\ldots ,x_{n+1})=\sum_{i=1}^na_i x_{n+1}\left(x_i+\frac{1}{x_i}\right)+a_{n+1} x_{n+1}+\frac{1}{x_{n+1}}$$
where $a_i \in \F^*_{q}$, $i=1,2, \ldots, n+1$. Evaluating the generating exponential sum associated to $f \in \mathcal{F}$ is vital in analytic number theory. For instance, in Iwaniec's work \cite{Iwaniec1990} on small eigenvalues of the Laplace-Beltrami operator acting on automorphic functions with respect to $\Gamma_0(p)$, he improved the lower bound of eigenvalues conjectured by Selberg via the  estimate for $S_1^*(f)$ ($f \in \mathcal{F}$). 

For non-degenerate Laurent polynomial $f \in \mathcal{F}$ with $n+1$ variables, the associated L-function $L^*(f,T)^{(-1)^n}$ is a polynomial of degree $2^{n+1}$ by Adolphson-Sperber's theorem \cite{AS1989}. To understand the $p$-adic arithmetic property of $L^*(f,T)$, we compute the corresponding Newton polygon of $L^*(f,T)^{(-1)^n}$, denoted by NP($f$). The computation of NP($f$) turns out to be extremely complicated in general, although a standard lower bound of the Newton polygon is known. Adolphson and Sperber\cite{AS1989} constructed a certain combinatorial lower bound called the \emph{Hodge polygon}, the vertices of which are expressed in terms of Hodge numbers. Compared to NP($f$), the corresponding Hodge polygon is much easier to compute. So the problem is reduced to determining the shape of Hodge polygon and when the Newton polygon coincides with the corresponding Hodge polygon. 

Let $\vec{a}=(a_1,\ldots,$$a_{n+1})$ denote the coefficients of a non-degenerate Laurent polynomial $f \in \mathcal{F}$ and $\Delta_{n}=\Delta(f)$ denote the Newton polyhedron of $f$. Based on Wan's decomposition theorem\cite{Dwan2004}, Zhang and Feng \cite{Zhang2013OnLO} computed the Hodge polygon HP($\Delta_n$) and proved that this family $\mathcal{F}$ is generically ordinary for any prime $p$, i.e., there exsits a non-zero polynomial $h_p(\Delta_{n})(\vec{a})\in \mathbb{F}_p[a_1,\ldots,a_{n+1}]$ satisfying: if $h_p(\Delta_{n})(\vec{a})\neq0$, the Newton polygon of $L^*(f,T)^{(-1)^n}$ coincides with its lower bound HP($\Delta_n$). In Dwork's terminology, $h_p(\Delta_{n})(\vec{a})$ is called a \emph{Hasse polynomial} which is the main object to study in this paper.

Wan provided a general method to directly calculate the Hasse polynomials\cite{Dwan2004} but the method becomes insufficient for higher dimensional Newton polyhedrons. Based on Wan's method, Zhang and Feng\cite{Zhang2013OnLO} obtained an explicit formula of Hasse polynomials in low dimensions, i.e., $n \leq 3$. Let $h_p(\Delta_{n}, \leq1)$ denote the factor of $h_p(\Delta_{n})$ satisfying: if $h_p(\Delta_{n},\leq1)\neq 0$, Newton polygon coincides with HP($\Delta_n$) for all sides of slope $\leq 1$. We give an explicit formula for $h_p(\Delta_{n}, \leq1)$ where $n \in \mathbb{Z}_{\geq 0}$. 

\begin{Th} Let $f(x_1,\ldots,x_{n+1})\in \mathcal{F}$ be a non-degenerate laurent polynomial with $\Delta_n=\Delta(f)$. When $n\geq2$, a Hasse polynomial of slope at most one side can be taken to be,
\begin{equation*}
h_p(\Delta_n,\leq1)(\vec{a}) =\sum_{\substack{0 \leq v_1+\ldots +v_n \leq \frac{p-1}{2} \\ v_1, \ldots v_n \in \Z_{\geq0}}} \frac{a_1^{2v_1}a_2^{2v_2}\ldots a_n^{2v_n}a_{n+1}^{p-1-2(\sum_{i=1}^n v_i)}}{({v_1}!{v_2}!\ldots {v_n}!)^2(p-1-2(\sum_{i=1}^n v_i))!}.
\end{equation*}
\end{Th}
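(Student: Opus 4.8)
The plan is to realize $h_p(\Delta_n,\leq 1)$ as the reduction modulo $p$ of a single explicit coefficient arising in Dwork's $p$-adic trace formula, and then to evaluate that coefficient combinatorially. Following Adolphson--Sperber and Wan's decomposition theory as used by Zhang and Feng, the $L$-function $L^*(f,T)^{(-1)^n}$ is computed from the Frobenius operator $\alpha$ acting on the Dwork space with basis indexed by the lattice points of the cone $C(\Delta_n)$, and its Newton polygon meets the Hodge polygon $\mathrm{HP}(\Delta_n)$ along the slope $\leq 1$ side exactly when the determinant of the block of $\alpha$ attached to the Hodge filtration pieces of weight $\leq 1$ is a $p$-adic unit; that determinant, reduced modulo the maximal ideal, is by definition $h_p(\Delta_n,\leq 1)$. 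So I would first use the Zhang--Feng description of the Hodge numbers of $\Delta_n$ to record precisely which weight-$0$ and weight-$1$ lattice points index this block, noting that the weight-$0$ piece is one-dimensional (the origin, the unit root), so that the genuine condition lives on the weight-$1$ piece.

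Next I would bring in the splitting function. Write $\theta(t)=\exp(\pi(t-t^p))=\sum_{m\ge 0}\gamma_m t^m$ with $\pi^{p-1}=-p$ and $\gamma_m\equiv \pi^m/m!$ to leading order, and form $\Theta(x)=\prod_j \theta(\hat a_j x^{w_j})$ over the $2n+2$ monomials $w_j$ of $f$ (with Teichm\"uller lifts $\hat a_j$): the monomials $x_i x_{n+1}$ and $x_i^{-1}x_{n+1}$ carry coefficient $a_i$ for $1\le i\le n$, $x_{n+1}$ carries $a_{n+1}$, and $x_{n+1}^{-1}$ carries $1$. The entries of $\alpha$ are coefficients of $\Theta$, and the nested shape of $f$ (every $x_i$-monomial carries the factor $x_{n+1}$, with the single monomial $x_{n+1}^{-1}$ supplying the opposite $x_{n+1}$-direction) makes the weight-$\le 1$ block essentially triangular, so that its determinant reduces to the coefficient of the constant monomial $x^0$ in $\Theta$ at $\pi$-adic order $2(p-1)$.

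Extracting that coefficient is the combinatorial heart. If $x_i x_{n+1}$ and $x_i^{-1}x_{n+1}$ are used $v_i$ and $w_i$ times, while $x_{n+1}$ and $x_{n+1}^{-1}$ are used $s$ and $t$ times, then vanishing of the $x_i$-exponents forces $w_i=v_i$, vanishing of the $x_{n+1}$-exponent forces $t=2\sum_i v_i+s$, and fixing the $\pi$-order at $2(p-1)$ pins down $t=p-1$ and hence $s=p-1-2\sum_i v_i\ge 0$, i.e. $\sum_i v_i\le (p-1)/2$. The product of leading $\theta$-coefficients is then $\prod_i(\pi^{v_i}/v_i!)^2\cdot(\pi^s/s!)(\pi^{p-1}/(p-1)!)$, contributing exactly $\pi^{2(p-1)}/\big(\prod_i (v_i!)^2\,(p-1-2\sum_i v_i)!\,(p-1)!\big)$ times $a_1^{2v_1}\cdots a_n^{2v_n}a_{n+1}^{p-1-2\sum_i v_i}$. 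Summing over admissible $(v_1,\ldots,v_n)$ and dividing by the normalizing power $\pi^{2(p-1)}=p^2$ dictated by the Hodge weights, the unit $1/(p-1)!\equiv -1$ is absorbed into the overall scalar, and what survives modulo $p$ is precisely the asserted sum.

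The main obstacle sits between this conceptual reduction and the clean formula, and it is twofold. First, one must justify that the weight-$\le 1$ block truly collapses to this one coefficient: this is where Wan's facial decomposition of $\Delta_n$ and the nested structure must be combined to show that every off-diagonal contribution raises the $\pi$-order and so does not affect the determinant modulo $p$. Second, one must verify that only the leading terms $\gamma_m\sim \pi^m/m!$ survive --- the corrections from the $-t^p$ in $\theta$, and any balancing with $\sum_i v_i>(p-1)/2$ or $t\ne p-1$, all carry strictly larger $p$-adic valuation and vanish after normalization. Carrying out this valuation bookkeeping, rather than the routine identification of the surviving monomials, is where the real work lies; the hypothesis $n\ge 2$ enters in guaranteeing that the weight-$1$ piece has the dimension needed for the block to take this form, and the answer is of course only well defined up to a nonzero scalar in $\F_p$.
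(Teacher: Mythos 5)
Your framework is the right one---Dwork's trace formula, with $h_p(\Delta_n,\leq 1)$ realized as the mod~$p$ reduction of the determinant of the weight~$\leq 1$ block of the Frobenius matrix---and your combinatorial extraction is essentially the paper's computation of the key entry: the constraints $w_i=v_i$, $s=p-1-2\sum_i v_i\geq 0$, and the product $\prod_i(\pi^{v_i}/v_i!)^2\cdot(\pi^s/s!)$ are exactly what produce the stated sum. The genuine gap is the step you yourself flag as ``where the real work lies'': you assert, but do not prove, that the determinant of the $(2n+3)\times(2n+3)$ weight-$\leq 1$ block collapses to the single quantity you compute. As stated, the reduction target is not even the right object: a triangular block has determinant equal to the \emph{product of all its diagonal entries}, so besides the entry carrying the formula you must identify the remaining diagonal entries (indexed by $-e_{n+1}$ and by $\pm e_i+e_{n+1}$, $1\leq i\leq n$), check that each is a $p$-adic unit---in the paper these come out to $\frac{1}{(p-1)!}a_k^{p-1}$, units because $a_k\in\F_q^*$---and show that all entries on one side of the diagonal vanish mod~$p$ after normalization. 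None of this is carried out. Your ``coefficient of $x^0$ in $\Theta$ at $\pi$-adic order $2(p-1)$'' happens to equal the product of just two of these diagonal entries (those indexed by $e_{n+1}$ and $-e_{n+1}$), which is why your final expression agrees with the target up to the unit $1/(p-1)!$, but that identification is nowhere justified and the other $2n$ diagonal factors are never mentioned.

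The paper's device for making the triangularity painless is one you invoke only in passing: Wan's facial decomposition theorem lets one replace $f$ by its restriction $g=f^{\delta_0}$ to the codimension-one face on which the last coordinate equals $1$, i.e.\ drop the $1/x_{n+1}$ term entirely before touching the Frobenius matrix. For $g$ the weight-one block is a genuine $(2n+1)\times(2n+1)$ lower-triangular matrix once the vertices are ordered with $(0,\ldots,0,1)$ first; its top diagonal entry is $\frac{1}{p}F_{(0,\ldots,0,p-1)}(g)$, which reduces mod~$p$ to the asserted sum, and the valuation bookkeeping you defer becomes a short verification. If you insist on working with the full $f$, you must carry out the corresponding (longer) check for the $(2n+2)\times(2n+2)$ weight-one block, including the unit computation for the $-e_{n+1}$ and $\pm e_i+e_{n+1}$ entries. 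Either way, the argument as written is incomplete at precisely its load-bearing step.
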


Zhang-Feng's formula for $n=3$ case is very complicated that involves the determinant of a $4\times 4$ matrix whose entries are all polynomials. In this paper, we provide a much simpler formula for the $n=3$ case based on Denef-Loeser's theorem\cite{Denef1991} and the symmetric property of NP($f$), which answers an open question of Zhang and Feng\cite{Zhang2013OnLO}. 
\begin{Th} For $n=3$, let  $f(x_1, \ldots, x_4)\in \mathcal{F}$ be a non-degenerate Laurent polynomial with $\Delta(f)=\Delta_3$. A Hasse polynomial of $\Delta_3$ can be written as,
$$h_p(\Delta_3)(\vec{a})=h_p (\Delta_3,\leq1) (\vec{a})=\sum_{\substack{0 \leq v_1+v_2+v_3 \leq \frac{p-1}{2} \\ v_1, v_2, v_3 \in \Z_{\geq0}}} \frac{a_1^{2v_1}a_2^{2v_2}a_3^{2v_3}a_{4}^{p-1-2(\sum_{i=1}^3 v_i)}}{({v_1}!{v_2}!{v_3}!)^2(p-1-2(\sum_{i=1}^3 v_i))!}.$$ 

In particular, NP($f$)=HP($\Delta_3$) if and only if $h_p(\Delta_3)(\vec{a})\not\equiv 0(\mathrm{mod}\ p)$ where $\vec{a}$ is the vector of coefficients of $f$.
\end{Th}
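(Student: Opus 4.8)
The plan is to deduce the theorem from the earlier general Theorem together with the symmetry of the Newton and Hodge polygons about their common center. First, specializing the formula for $h_p(\Delta_n,\leq1)$ to $n=3$ yields at once the displayed expression for $h_p(\Delta_3,\leq1)$, so the real content is the two assertions (i) that this slope-$\leq1$ polynomial already serves as a \emph{full} Hasse polynomial, i.e. $h_p(\Delta_3)=h_p(\Delta_3,\leq1)$, and (ii) the characterization $\mathrm{NP}(f)=\mathrm{HP}(\Delta_3)\iff h_p(\Delta_3)(\vec a)\not\equiv0$. Assertion (ii) is the standard dictionary once (i) is in hand: $h_p(\Delta_3,\leq1)$ is constructed in the proof of the earlier Theorem so that its non-vanishing is \emph{equivalent} to coincidence of $\mathrm{NP}(f)$ and $\mathrm{HP}(\Delta_3)$ on all sides of slope $\leq1$, and (i) upgrades this equivalence to coincidence on all slopes (using Zhang--Feng's generic ordinariness to guarantee the locus is non-empty).

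The heart is (i), for which I would exploit two symmetries. The Laurent polynomial $f$ is invariant under each involution $x_i\mapsto 1/x_i$ ($1\leq i\leq 3$), since each $x_i$ enters only through $x_i+\tfrac1{x_i}$; more decisively, Denef--Loeser's theorem \cite{Denef1991} supplies a functional equation for $L^*(f,T)^{(-1)^n}$, forcing $\mathrm{NP}(f)$ to be symmetric under the slope reflection $s\mapsto (n+1)-s$. For $n=3$ this is reflection about the central slope $2$. The Hodge polygon $\mathrm{HP}(\Delta_3)$ is symmetric about the same slope, by the duality built into the Hodge numbers of $\Delta_3$. I would record both as reflections fixing the common midpoint of the degree-$16$ polygon.

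Next I would pin down the \emph{shape} of $\mathrm{HP}(\Delta_3)$ from Zhang--Feng's explicit Hodge-number computation: the claim to verify is that $\mathrm{HP}(\Delta_3)$ has no side of slope strictly between $1$ and $3$ other than a single straight segment of slope exactly $2$ joining the slope-$\leq1$ part to the slope-$\geq3$ part. That is, all bending is concentrated in slopes $\leq1$ and, by symmetry, in slopes $\geq3$, while the whole weight-$2$ contribution forms one central chord. Granting this, the conclusion is a short convexity argument. If $h_p(\Delta_3,\leq1)(\vec a)\neq0$, then $\mathrm{NP}(f)=\mathrm{HP}(\Delta_3)$ on all slopes $\leq1$; applying the common symmetry about slope $2$ transports this to coincidence on all slopes $\geq3$, so the two polygons share the endpoints $P_1,P_2$ of the central chord. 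On $[P_1,P_2]$ the Newton polygon is convex and hence lies on or below the chord $P_1P_2$, while it lies on or above $\mathrm{HP}(\Delta_3)$, which \emph{is} that chord; squeezing gives equality on the middle as well, so $\mathrm{NP}(f)=\mathrm{HP}(\Delta_3)$ everywhere.

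The main obstacle I anticipate is the shape computation of the previous paragraph: verifying, from the Hodge numbers of $\Delta_3$, that the middle of $\mathrm{HP}(\Delta_3)$ is a single slope-$2$ segment with no sides of slope in $(1,2)\cup(2,3)$. This is precisely what makes the chord/convexity step applicable, and it is special to $n=3$, where the center of symmetry is the lone integer slope $2$. A secondary technical point is checking that $\mathcal F$ satisfies the hypotheses of Denef--Loeser's theorem, so that the functional equation and the exact reflection $s\mapsto 4-s$ are valid; this should follow from non-degeneracy of $f$ and the explicit structure of $\Delta_3$.
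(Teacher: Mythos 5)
Your proposal is correct and follows essentially the same route as the paper's proof: specialize the slope-$\leq 1$ formula to $n=3$, invoke Denef--Loeser purity (the origin is interior to $\Delta_3$) to obtain the reflection $s\mapsto 4-s$ of NP($f$), and combine this with the symmetry of HP($\Delta_3$) and a convexity squeeze across the central slope-$2$ chord from $(5,4)$ to $(11,16)$. The ``shape computation'' you flag as the main obstacle is already supplied by the quoted Zhang--Feng theorem, which gives $D=1$ and sides of integer slopes $0,1,2,3,4$ with horizontal lengths $1,4,6,4,1$.
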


Furthermore, we are interested in the irreducibility of the Hasse polynomials. For $n \geq 3$ and $p\leq7$, we proved that the Hasse polynomial $h_p(\Delta_n,\leq1)(\vec{a})$ is irreducible over $\overline{\mathbb{F}}_p$. Based on this fact, we have the following hypothesis of the irreducibility. 

\begin{Conj} Let $p$ be an odd prime and $n\geq 3$. Then the Hasse polynomial $h_p(\Delta_n, \leq1)(\vec{a})$ is irreducible over $\overline{\F}_p$. 
\end{Conj}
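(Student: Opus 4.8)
The plan is to prove irreducibility by induction on $n$, reducing the general case to the single base case $n=3$, and then to attack $n=3$ through the explicit restrictions of $h:=h_p(\Delta_n,\leq1)(\vec a)$ to the faces of its Newton polytope. First I would record the basic shape of $h$. Writing $v_{n+1}=\frac{p-1}{2}-\sum_{i\le n}v_i$, every monomial has exponent vector $(2v_1,\ldots,2v_{n+1})$ with $\sum_{i=1}^{n+1}v_i=\frac{p-1}{2}$, so $h$ is homogeneous of degree $p-1$. Since every $v_i$ and every $2v_{n+1}$ is $<p$, each factorial in a denominator is a unit in $\F_p$; hence every such monomial actually occurs, and the Newton polytope of $h$ is the full dilated simplex $(p-1)\Sigma$, where $\Sigma=\operatorname{conv}(\mathbf e_1,\ldots,\mathbf e_{n+1})$. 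In particular $a_i\nmid h$ for every $i$ (the monomial $a_{n+1}^{p-1}$ appears with nonzero coefficient), and by homogeneity it suffices to rule out a factorization $h=g\,g'$ into homogeneous factors of degrees $k$ and $p-1-k$ with $1\le k\le p-2$.

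The inductive step is the heart of the reduction. Restricting $h$ to the facet $a_n=0$ kills exactly the terms with $v_n>0$, and the surviving sum is precisely the Hasse polynomial $h_p(\Delta_{n-1},\leq1)$ in the variables $a_1,\ldots,a_{n-1},a_{n+1}$; the same holds for any $a_i=0$ with $i\le n$ by the symmetry of $h$ in $a_1,\ldots,a_n$. Now suppose $h=g\,g'$ as above. Since $a_n\nmid h$, neither $g$ nor $g'$ is divisible by $a_n$, so $g|_{a_n=0}$ and $g'|_{a_n=0}$ are \emph{nonzero} homogeneous polynomials of degrees $k$ and $p-1-k$ whose product is $h_p(\Delta_{n-1},\leq1)$. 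If $h_p(\Delta_{n-1},\leq1)$ is irreducible, this contradicts $1\le k\le p-2$. Thus irreducibility for $n-1\ge3$ implies irreducibility for $n$, and for a fixed odd $p$ the whole conjecture reduces to the base case $n=3$.

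For $n=3$ the facets $a_i=0$ ($i\le3$) return $h_p(\Delta_2,\leq1)$, which need not be irreducible, so the induction cannot be continued downward and the base case must be handled directly. Here the main tool I would develop is the \emph{edge} restrictions. Keeping only $a_i$ ($i\le n$) and $a_{n+1}$ and dehomogenizing at $a_{n+1}=1$, the edge restriction equals $\psi(x)=\sum_{v=0}^{(p-1)/2}\frac{x^v}{(v!)^2\,(p-1-2v)!}$ with $x=a_i^2$. Multiplying by $(p-1)!$ rewrites the coefficients as $\binom{p-1}{2v}\binom{2v}{v}\equiv\binom{2v}{v}\pmod p$, and the classical congruence $\sum_{v=0}^{(p-1)/2}\binom{2v}{v}x^v\equiv(1-4x)^{(p-1)/2}\pmod p$ (from $\binom{2v}{v}=(-4)^v\binom{-1/2}{v}$ and $-\tfrac12\equiv\tfrac{p-1}{2}$), together with Wilson's theorem, give $\psi(x)\equiv-(1-4x)^{(p-1)/2}$. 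Hence on each edge through $a_{n+1}$ the form $h$ restricts to the perfect power $\bigl((a_{n+1}-2a_i)(a_{n+1}+2a_i)\bigr)^{(p-1)/2}$.

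This computation already shows that $\{h=0\}$ is badly non-reduced along the coordinate edges, which is why a Jacobian/smoothness argument is unavailable. Instead, from a hypothetical factorization $h=g\,g'$ I would read off, on each of the three edges through $a_4$, that $g$ restricts to $(a_4-2a_i)^{s_i}(a_4+2a_i)^{t_i}$ with $s_i+t_i=k$ and $s_i,t_i\le\frac{p-1}{2}$, and then combine these multiplicity constraints with the restriction of $h$ to the remaining facet $a_4=0$ (a Legendre-type form obtained the same way) in order to force $k\in\{0,p-1\}$. I expect precisely this last step --- controlling, uniformly in $p$, how the multiplicities on the various faces are compelled to fit together --- to be the main obstacle, and it is presumably why the statement survives only as a conjecture, with the cases $p\le7$ verified by explicit computation.
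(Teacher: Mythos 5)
You should first note what you are up against: the statement is presented in the paper as a \emph{conjecture}. The paper proves it only for $p\leq 7$ (via the singular-locus criterion of its Lemma 4.1) and records, as you do, that the general case reduces to $n=3$; it gives no proof for general $p$. Your proposal, by your own admission, also stops short of the base case, so it is not a proof. That said, everything you actually claim is correct and some of it goes beyond the paper. The homogeneity of degree $p-1$, the non-vanishing mod $p$ of every coefficient, the fact that no $a_i$ divides $h$, and the induction step (restricting to a facet $a_i=0$ with $i\leq n$ recovers $h_p(\Delta_{n-1},\leq1)$ exactly, and a nontrivial homogeneous factorization would descend because neither factor can be divisible by $a_i$) are all sound; this is the same reduction the paper uses in its $p=7$ example and its closing remark, and your facet choice (kill one of $a_1,\ldots,a_n$, keep $a_{n+1}$) is in fact the correct one, whereas the paper's Example 4.4 reduces modulo $(a_5,\ldots,a_{n+1})$, which kills the distinguished last variable and does not literally return $h_7(\Delta_3,\leq1)$. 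Your edge identity $\psi(x)\equiv-(1-4x)^{(p-1)/2}\pmod p$ also checks out: $(p-1)!/\bigl((v!)^2(p-1-2v)!\bigr)=\binom{p-1}{2v}\binom{2v}{v}\equiv\binom{2v}{v}$, and the central-binomial congruence plus Wilson's theorem finish it; it is consistent with the paper's explicit singular points $4a_i^2=a_4^2$ for $p=7$ and with the non-degeneracy locus $\pm2a_i+a_{n+1}\neq0$.

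The genuine gap is exactly where you locate it: the base case $n=3$ for a general odd prime $p$. The edge restrictions only force a hypothetical degree-$k$ factor $g$ to restrict, on each of the three coordinate edges through $a_4$, to $c\,(a_4-2a_i)^{s_i}(a_4+2a_i)^{t_i}$ with $s_i+t_i=k$ and $s_i,t_i\leq\frac{p-1}{2}$; nothing in these constraints excludes, say, $k=\frac{p-1}{2}$. The remaining facet $a_4=0$ yields the squared-multinomial form $\sum_{v_1+v_2+v_3=(p-1)/2}\prod a_i^{2v_i}/\prod(v_i!)^2$, whose factorization mod $p$ is not controlled by any identity you invoke, so the step of "forcing the multiplicities to fit together" is not a routine verification but is precisely the open content of the conjecture. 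What you have actually proved is the conditional statement that irreducibility of $h_p(\Delta_3,\leq1)$ implies the full conjecture (which the paper also asserts), together with the unconditional edge identity, which is new relative to the paper and does suggest why the Jacobian criterion of Lemma 4.1 cannot be pushed much past $p=7$.
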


This paper is organized as follows. In section 2, we review some basic definitions, technical theorems and relevant results including Dwork's trace formula, Wan's decomposition theory and Zhang-Feng's results. In section 3, we rigorously prove the two theorems concerning explicit formulas of the Hasse polynomials. In section 4, we study the irreducibility of the Hasse polynomials and provide some open problems.

\section{Preliminary}

\subsection{Rationality of the generating L-function.} 
Let $p$ be a prime and $q=p^a$ for some positive integer $a$. Let $\F_{q}$ be the finite field of $q$ elements with characteristic $p$ and $\F_{q^k}$ denote the degree $k$ finite extension of $\F_{q}$. For any Laurent polynomial $f(x_1,\ldots, x_n)\in \F_{q}[x_1^{\pm1},\ldots, x_n^{\pm1}]$, we define the associated $k$-th exponential sum as follows,
$$S^*_k(f)=\sum_{x_i \in \F^*_{q^k}} \zeta^{ \mathrm{Tr_k}  f(x_1, \ldots, x_{n})}_p, \ \ \ k=1,2,3,\ldots$$
where $\zeta_p$ is a fixed primitive $p$-th root of unity in $\C$ and $\mathrm{Tr_k}$ denotes the trace map from $\F_{q^k}$ to $\F_{p}$. 

In analytic number theory, it's a classical problem to give a good estimate for $|S^*_k(f)|$. In order to obtain the absolute values of the exponential sums, we usually study the generating L-functions. For a Laurent polynomial $f\in  \F_{q}[x_1^{\pm1},\ldots, x_n^{\pm1}]$, the generating L-function is defined to be,
$$L^*(f,T)=\exp  \left(\sum^\infty _ {k=1} S^*_k (f) \frac{T^k}{k}\right) \in \mathbb{Q}(\zeta_p)[[T]].$$ By a theorem of Dwork-Bombieri-Grothendieck\cite{Dwork1962OnTZ, Grothendieck1964}, the generating L-function is a rational function,
$$L^*(f,T)=\frac{\prod^{d_1}_{i=1}(1-\alpha_iT)}{\prod^{d_2}_{j=1}(1-\beta_jT)},$$ where all the reciprocal roots and zeros are non-zero algebraic integers. After taking logarithmic derivatives, we have the formula,
$$S_k^*(f)=\sum^{d_2}_{j=1}\beta_j^k-\sum^{d_1}_{i=1}\alpha_i^k, \  \  \ k=1,2,3,\ldots$$ 
The formula implies that the zeros and poles of the generating L-function contain critical information of the exponential sums.

From Deligne's theorem on Riemann hypothesis \cite{Deligne1980}, the complex absolute values of reciprocal zeros and poles are bounded as follows,
$$|\alpha_i|=q^{u_i/2}, |\beta_j|=q^{v_j/2}, u_i\in \mathbb{Z} \cap [0,2n], v_j\in \mathbb{Z} \cap [0,2n].$$ Restricting to non-degenerate Laurent polynomials, Adolphson and Sperber \cite{AS1989} showed that the associated L-function $L^*(f,T)^{(-1)^{n-1}}$ is a polynomial of degree $n!\mathrm{Vol(\Delta)}$. Here $\mathrm{Vol(\Delta)}$ is the volume of the Newton polyhedron of $f$ and the precise definition of Newton polyhedron will be given in the next subsection. They also got a sharper estimate for the archimedean size of the reciprocal zeros, i.e.,  $$L^*(f,T)^{(-1)^{n-1}}=\sum^{n! \mathrm{Vol(\Delta)}}_{i=1} A_i(f)T^i=\prod^{n! \mathrm{Vol(\Delta)}}_{i=1}(1-\alpha_i T),$$
where $A_i(f) \in \Z[\zeta_p]$ and $|\alpha_i| \leq q^{n/2}$. For non-archimedean values, Deligne\cite{Deligne1980} proved that $|\alpha_i|_\ell=|\beta_j|_\ell=1$ when $\ell$ is a prime and $\ell\neq p$. So the remaining interesting part is the $p$-adic absolute value which will be studied in this paper.

For non-degenerate Laurent polynomials, Adolphson and Sperber proved that the associated L-function $L^*(f,T)^{(-1)^{n-1}}$ is a polynomial over $\mathbb{Z}[\zeta_p]$. If we want to calculate $p$-adic absolute value of the reciprocal roots of a polynomial, it's equivalent to determine its Newton polygon. Adolphson and Sperber\cite{AS1989} proved that the Newton polygon of the associated L-function of a non-degenerate Laurent polynomial lies above a certain topological or combinatorial lower bound called \emph{Hodge polygon} and denoted by HP($\Delta$) which depends only on the Newton polyhedron $\Delta$ and is easier to compute. We reduce the problem to determining the shape of Hodge polygon and when the Newton polygon coincides with the corresponding Hodge polygon.

\subsection{Newton polygon and Hodge polygon} 
Let $f(x_1, \ldots x_n)=\sum^J_{j=1} a_j x^{V_j}$ be a Laurent polynomial with $a_j \in \F ^*_{q}$ and $V_j=(v_{1j},\ldots, v_{nj})\in \Z^n$. The \emph{Newton polyhedron} of $f$, denoted by $\Delta(f)$, is defined to be the convex closure in $\R^n$ generated by the origin and the lattice points $V_j$ ($1\leq j \leq J$). If $\delta$ denotes a subset of $\Delta(f)$, then the restriction of $f$ to $\delta$ is defined to be $f^{\delta}=\sum_{V_j\in \delta}a_j x^{V_j}$. Generally, we require Laurent polynomials to be \emph{non-degenerate} defined as follows. 
 
\begin{Def}A Laurent polynomial $f$ is called non-degenerate if for each closed face $\delta$ of $\Delta(f)$ of arbitrary dimension which doesn't contain the origin, the $n$ partial derivatives, $\{ \frac{\partial f^{\delta}}{\partial x_1 }, \ldots, \frac{\partial f^{\delta}}{\partial x_n} \}$, have no common zeros with $x_1\ldots x_n \neq 0$ over $\bar{\F}_q$.
 \end{Def}

For any non-degenerate Laurent polynomial $f$ in $n$ variables, Adolphson and Sperber's theorem \cite{AS1989} shows that the associated L-function is of the following form, 
$$L^*(f,T)^{(-1)^{n-1}}=\sum^{n! \mathrm{Vol(\Delta)}}_{i=1} A_i(f)T^i=\prod^{n! \mathrm{Vol(\Delta)}}_{i=1}(1-\alpha_i T)$$ 
where $A_i(f) \in \Z[\zeta_p]$ and $|\alpha_i| \leq q^{n/2}$. Deligne's integrality theorem implies that the $p$-adic absolute values of reciprocal roots are given by $|\alpha_i|_p=q^{-r_i}$ where $r_i\in \mathbb{Q}\cap[0,n]$. For simplicity, we normalize $p$-adic absolute value to be $|q|_p=q^{-1}$. We can use $q$-adic Newton polygon to get more information of the $q$-adic absolute values of zeros since the shape of Newton polygon will provide the valuation information of all the reciprocal roots of the polynomial. Define the $q$-adic Newton polygon as follows.

\begin{Def}\label{m1}
Let $g(T)=\sum^n_{i=0}b_i T^i$ $\in 1+T\overline{\Q}_p[T]$ where $\overline{\Q}_p$ is the algebraic closure of $\Q_p$. The $q$-adic Newton polygon of $g(T)$ is defined to be the lower convex closure of the set of points $\{(k,\ord_q(b_k)) | k=0, 1,\ldots, n \}$ in $\R^2$.
\end{Def}

Here ord$_q$ denotes the standard $q$-adic ordinal on $\bar{\mathbb{Q}}_p$ where the valuation is normalized to be $\text{ord}_q(q)=1$. The following lemma \cite{koblitz2012p} relates the $q$-adic Newton polygon to $q$-adic valuation of reciprocal roots.

\begin{Lemma}In the above notation, let $g(T)=(1-{\alpha_1}T)\ldots(1-{\alpha_n}T)$ be the factorization of $g(T)$ in terms of reciprocal roots $\alpha_i \in \bar{\Q}_p$. Let $\lambda_i=\text{ord}_q\alpha_i$. If $\lambda$ is a slope of the $q$-adic Newton polygon with horizontal length $l$, then precisely $l$ of the $\lambda_i$ are equal to $\lambda$.
\end{Lemma}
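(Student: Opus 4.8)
The plan is to prove this classical fact by relating the coefficients $b_k$ of $g(T)$ to the elementary symmetric functions of the reciprocal roots and then reading off the Newton polygon via the ultrametric inequality. First I would relabel the roots so that their valuations are sorted, say $\lambda_1 \leq \lambda_2 \leq \cdots \leq \lambda_n$, and let $\mu_1 < \cdots < \mu_s$ denote the distinct values among them, occurring with multiplicities $l_1, \ldots, l_s$. Expanding $g(T)=\prod_{i=1}^n(1-\alpha_i T)$ gives $b_k=(-1)^k e_k(\alpha_1,\ldots,\alpha_n)$, where $e_k$ is the $k$-th elementary symmetric polynomial, a sum of $\binom{n}{k}$ products of $k$ distinct roots; since $(-1)^k$ is a unit, $\ord_q(b_k)=\ord_q(e_k)$.

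The key estimate is the lower bound $\ord_q(b_k)\geq \lambda_1+\cdots+\lambda_k$ for every $k$. This follows immediately from the ultrametric inequality applied to $e_k$: each monomial $\prod_{i\in S}\alpha_i$ with $|S|=k$ has valuation $\sum_{i\in S}\lambda_i\geq \lambda_1+\cdots+\lambda_k$, the right-hand side using the $k$ roots of smallest valuation. Because the sorted partial sums $k\mapsto \lambda_1+\cdots+\lambda_k$ have non-decreasing increments, this function is convex, and its graph is a polygon whose successive slopes are exactly $\mu_1<\cdots<\mu_s$ with horizontal lengths $l_1,\ldots,l_s$.

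The crux is to show that equality $\ord_q(b_k)=\lambda_1+\cdots+\lambda_k$ holds at the breakpoints, i.e.\ at each $k=l_1+\cdots+l_j$. At such a $k$ there is a strict jump $\lambda_k<\lambda_{k+1}$, so the subset $S=\{1,\ldots,k\}$ is the \emph{unique} subset of size $k$ attaining the minimal valuation $\lambda_1+\cdots+\lambda_k$: any other $S$ must omit some index $\leq k$ and include some index $>k$, strictly increasing the total. The ultrametric inequality is an equality precisely when the minimum is attained exactly once, so there is no cancellation and equality holds. This uniqueness argument is the main obstacle, as it is exactly where the strictness of the jumps is essential.

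Finally I would assemble these facts. The lower bound together with the convexity of the partial-sum function shows that every point $(k,\ord_q(b_k))$ lies on or above the polygon through the breakpoint vertices $(k,\lambda_1+\cdots+\lambda_k)$, while the equality at breakpoints shows these vertices are actually attained (the vertex at $k=0$ being $(0,0)$ since $b_0=1$). Hence the $q$-adic Newton polygon of $g(T)$ coincides with this polygon, whose slope-$\mu_j$ side has horizontal length $l_j$, namely the number of $\lambda_i$ equal to $\mu_j$. Taking $\lambda=\mu_j$ and $l=l_j$ yields the claim.
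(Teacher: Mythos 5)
Your proof is correct. Note that the paper does not prove this lemma at all---it is quoted from the reference \cite{koblitz2012p} as a standard fact---so there is no in-paper argument to compare against; what you have written is the classical textbook proof. The two essential points are both handled properly: the ultrametric bound $\ord_q(b_k)\geq\lambda_1+\cdots+\lambda_k$ coming from $b_k=(-1)^k e_k(\alpha_1,\ldots,\alpha_n)$, and the equality at breakpoints, where your uniqueness argument is sound because any other $k$-subset must trade at least one index of valuation $\leq\lambda_k$ for one of valuation $\geq\lambda_{k+1}>\lambda_k$, strictly increasing the total, so no cancellation can occur.
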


For a given Laurent polynomial $f$ in $n$ variables, let NP($f$) denote the $q$-adic Newton polygon of $L^*(f,T)^{(-1)^{n-1}}$ and let $\Delta$ denote the Newton polyhedron $\Delta(f)\in \mathbb{R}^n$. Define $C(\Delta)$ to be the cone generated by $\Delta$ in $\R^{n}$. For any point $u\in \R^{n}$, the weight function $w(u)$ is defined to be the smallest non-negative real number $c$ such that $u\in c\Delta$ and $w(u)=\infty$ if such $c$ doesn't exist. Generally we have $w(\mathbb{Z}^n)\subseteq \frac{1}{D(\Delta)}\mathbb{Z}_{\geq0}\cup \{ + {\infty} \}$, where $D=D(\Delta)$ is the least common multiple of denominators of weights of lattice points on all the co-dimentional 1 faces of $\Delta$. Let $$W_{\Delta}(k)=\# \left \{u \in \Z^n | w(u)= \frac{k}{D}\right \}$$ be the number of lattice points in $\Z^n$ with weight $k/D$. Then define the Hodge polygon of a given polyhedron $\Delta$ as follows.

\begin{Def}The Hodge polygon HP($\Delta$) of $\Delta$ is the lower convex polygon in $\R^2$ with vertices (0,0) and 
$$P_k=\left( \sum^k_{m=0}H_{\Delta}(m), \frac{1}{D}\sum^k_{m=0}m H_{\Delta}(m)\right), \  k=0,1,\ldots, nD$$ 
where $H_{\Delta}(k)=\sum^{n}_{i=0}(-1)^i \binom{n}{i}W_{\Delta}(k-iD)$, $k=0,1,\ldots, nD.$\\ 
\indent \ \ That is, HP($\Delta$) is a polygon starting from origin (0,0) with a slope $k/D$ side of horizontal length $H_{\Delta}(k)$ for $k=0,1,\ldots, nD$. The vertex $P_k$ is called a break point if $H_{\Delta}(k+1)\neq 0$ where $k=1,2,\ldots,nD-1$. 
\end{Def}

Here the horizontal length $H_{\Delta}(k)$ represents the number of  lattice points of weight $k/D$ in a certain fundamental domain corresponding to a basis of the $p$-adic cohomology space used to compute the L-function. Adolphson and Sperber\cite{AS1989} proved that $H_{\Delta}(k)$ coincides with the usual Hodge number in the toric hypersurface case in which case $D=1$. Thus this lower bound is called the Hodge polygon.

\subsection{Hasse domain and Hasse polynomial}
For a fixed $n$-dimensional integral polytope $\Delta \subset \mathbb{R}^n$ containing origin, let $\mathcal{N}_p(\Delta)$ be the parameter space of Laurent polynomials $g$ defined over $\overline{\F}_p$ with $\Delta(g)=\Delta$. Then $\mathcal{N}_p(\Delta)$ is a smooth affine variety over $\mathbb{F}_p$. Let $\mathcal{M}_p(\Delta)$ be the subset of $\mathcal{N}_p(\Delta)$ consisting of all non-degenerate Laurent polynomials. By the definition of non-degeneracy, $\mathcal{M}_p(\Delta)$ is a Zariski open smooth affine subset of $\mathcal{N}_p(\Delta)$ which is the complement of a certain discriminant locus. Also $\mathcal{M}_p(\Delta)$ is non-empty for $p$ sufficiently large, i,e, $p > n!\text{Vol}(\Delta)$. Then it's natural to consider how Newton polygon NP($f$) varies as $f$ varies in $\mathcal{M}_p(\Delta)$. Grothendieck specialization theorem\cite{wan2000higher} implies there is a lower bound for the Newton polygon of $L^*(f,T)^{(-1)^{n-1}}$ for $f\in \mathcal{M}_p(\Delta)$ and the lower bound is attained for polynomials in some Zariski open dense subset of $\mathcal{M}_p(\Delta)$. Define the generic Newton polygon to be
$$\text{GNP}(\Delta,p):=\inf_{f\in \mathcal{M}_p}\text{NP}(f).$$ Based on Adolphson and Sperber's theorem\cite{AS1989}, we have the following inequalities. 

\begin{Prop}
For every prime p and $f \in \mathcal{M}_p(\Delta)$, we have $$\text{NP}(f) \geq \text{GNP}(\Delta,p) \geq \text{HP}(\Delta).$$ 
Furthermore, the endpoints of NP($f$) and NP($\Delta$) coincide.
\end{Prop}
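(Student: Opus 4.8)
The plan is to obtain all three assertions directly from Adolphson and Sperber's theorem \cite{AS1989} together with the definition of the generic Newton polygon, so that the proof is largely a matter of unwinding definitions. The left inequality $\text{NP}(f)\geq\text{GNP}(\Delta,p)$ is immediate: since $\text{GNP}(\Delta,p)=\inf_{f\in\mathcal{M}_p(\Delta)}\text{NP}(f)$, any single member of the family lies on or above the infimum of the whole family.

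For the right inequality $\text{GNP}(\Delta,p)\geq\text{HP}(\Delta)$, I would invoke the Adolphson--Sperber bound $\text{NP}(f)\geq\text{HP}(\Delta)$, valid for every non-degenerate $f$. Because $\text{HP}(\Delta)$ depends only on the polyhedron $\Delta$ and not on the chosen $f$, a polygon lying below every $\text{NP}(f)$ also lies below their infimum; taking the infimum over $f\in\mathcal{M}_p(\Delta)$ therefore preserves the inequality.

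For the coincidence of endpoints I would treat the two coordinates separately. All the polygons start at the origin by normalization, since the relevant polynomial lies in $1+T\overline{\Q}_p[T]$. For the right endpoint, the horizontal coordinate of $\text{NP}(f)$ is the degree $n!\,\mathrm{Vol}(\Delta)$ of $L^*(f,T)^{(-1)^{n-1}}$, while that of $\text{HP}(\Delta)$ is $\sum_{k=0}^{nD}H_{\Delta}(k)$; these agree by the combinatorial identity $\sum_{k}H_{\Delta}(k)=n!\,\mathrm{Vol}(\Delta)$, which follows from the generating relation $\sum_{k}H_{\Delta}(k)t^{k}=(1-t^{D})^{n}\sum_{k}W_{\Delta}(k)t^{k}$ via a standard Ehrhart-volume computation at $t=1$. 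The vertical coordinate of the right endpoint of $\text{NP}(f)$ equals $\sum_{i}\ord_q\alpha_i=\ord_q\!\big(\prod_{i}\alpha_i\big)$, the $q$-ordinal of the top coefficient of the L-function, whereas that of $\text{HP}(\Delta)$ is $\frac{1}{D}\sum_{k=0}^{nD}kH_{\Delta}(k)$.

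The main obstacle is this last vertical match: proving that the total $q$-adic valuation of the product of all reciprocal roots equals the terminal height of the Hodge polygon. This is not a formal consequence of the inequalities but is the substantive part of \cite{AS1989}, arising from the top graded piece of the associated Dwork $p$-adic cohomology. Accordingly, I would cite Adolphson--Sperber for this coincidence rather than reprove it, which completes the Proposition.
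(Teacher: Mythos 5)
Your argument is correct and matches the paper's approach: the paper states this Proposition without proof, simply as a consequence of Adolphson--Sperber's theorem \cite{AS1989} and the definition of $\text{GNP}(\Delta,p)$ as an infimum, which is exactly what you unwind. Your decision to cite \cite{AS1989} for the vertical coordinate of the right endpoint (rather than reprove it) is appropriate, since that coincidence is indeed part of their theorem and not a formal consequence of the inequalities.
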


\begin{Def} A Laurent polynomial $f$ is called ordinary if NP($f$) = HP($\Delta$). The family $\mathcal{M}_p(\Delta)$ is called generically ordinary if GNP($\Delta,p$)=HP($\Delta$).
\end{Def}
 
Let $\mathcal{H}_p(\Delta)=\{g\in \mathcal{M}_p(\Delta) | \text{NP} (g) = \text{HP}(\Delta)\}$ be the subset of $\mathcal{M}_p(\Delta)$ containing non-degenerate ordinary Laurent polynomials, which is called the $\textit{Hasse}$ $\textit{domain}$ in Dwork's terminology. Similarly, let $\mathcal{H}_p(\Delta, \leq k)=\{g\in \mathcal{M}_p(\Delta) |$ $\text{NP} (g) $ = $ \text{HP}(\Delta)$ for all sides of slopes $\leq k/D\}$ and $\mathcal{H}_p(\Delta, k) =\{g\in \mathcal{M}_p(\Delta) | \text{NP} (g) = \text{HP}(\Delta)$ for the slope $k/D \ \text{side} \}$. It's easy to check that $\mathcal{H}_p(\Delta)$, $\mathcal{H}_p(\Delta, \leq k)$ and $\mathcal{H}_p(\Delta, k)$ are Zariski-open subsets of $\mathcal{M}_p(\Delta)$ (possibly empty). A basic question is whether $\mathcal{H}_p(\Delta)$ is empty or not. If $\mathcal{H}_p(\Delta)=\varnothing$, then GNP($\Delta,p$) > HP($\Delta$) which implies $\mathcal{M}_p(\Delta)$ is not generically ordinary. In this case, every Laurent polynomial in $\mathcal{M}_p(\Delta)$ is not ordinary.

Let $\vec{a}$ denote the coefficients of $f \in \mathcal{M}_p(\Delta)$. If $\mathcal{H}_p(\Delta)$ is not empty, there exists a non-zero polynomial $h_p(\Delta)(\vec{a})$ $\in \mathbb{F}_p[\vec{a}]$ such that if the coefficients of $f \in \mathcal{M}_p(\Delta)$ satisfying $h_p(\Delta)(\vec{a})\neq 0$, then NP($f$) = HP($\Delta$). So we have the following proposition.

\begin{Prop} 
If $\mathcal{H}_p(\Delta)$ is not empty, then $\mathcal{H}_p(\Delta)$ is Zariski dense in $\mathcal{M}_p(\Delta)$ and the complement of $\mathcal{H}_p(\Delta)$ is a hypersurface determined by a non-zero polynomial $h_p(\Delta)$ over $\mathbb{F}_p$ which is called a Hasse polynomial with respect to $\Delta$.
\end{Prop}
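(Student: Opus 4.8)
The plan is to combine the irreducibility of the parameter space with the lower-semicontinuity of the Newton polygon, and then to realize the complement of the Hasse domain as the vanishing locus of a single explicit polynomial extracted from Dwork's theory. First I would record that $\mathcal{N}_p(\Delta)$ is irreducible: it parametrizes the Laurent polynomials with Newton polyhedron $\Delta$, so, after imposing only that the coefficients attached to the vertices of $\Delta$ be nonzero, it is a Zariski-open subset of an affine space and hence irreducible. Since $\mathcal{M}_p(\Delta)$ is a nonempty Zariski-open subset of $\mathcal{N}_p(\Delta)$, it is irreducible as well. Granting the hypothesis $\mathcal{H}_p(\Delta)\neq\varnothing$, the density assertion is then formal: $\mathcal{H}_p(\Delta)$ is a nonempty Zariski-open subset of the irreducible space $\mathcal{M}_p(\Delta)$, and in an irreducible space every nonempty open set is dense. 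In particular the complement $Z:=\mathcal{M}_p(\Delta)\setminus\mathcal{H}_p(\Delta)$ is a proper Zariski-closed subset.

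The substantive part is to show that $Z$ is a hypersurface, i.e. cut out by a single nonzero polynomial over $\F_p$. Here I would invoke Dwork's trace formula together with Adolphson--Sperber's theorem: the coefficients $A_i(f)$ of $L^*(f,T)^{(-1)^{n-1}}$ arise from the expansion of a Fredholm determinant of the Dwork--Frobenius operator and are therefore given by universal $p$-adically convergent expressions in $\vec{a}$ with integral coefficients. By the Newton-polygon Lemma, $\mathrm{NP}(f)=\mathrm{HP}(\Delta)$ exactly when $\mathrm{NP}(f)$ passes through each vertex $P_k=(n_k,y_k)$ of $\mathrm{HP}(\Delta)$, and this in turn is equivalent to $\ord_q A_{n_k}(f)=y_k$, the smallest value permitted by the Hodge bound. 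Normalizing $A_{n_k}(f)$ by the uniformizer to the power $y_k$ and reducing modulo the maximal ideal produces a well-defined polynomial $h_k(\vec{a})\in\F_p[\vec{a}]$, and the condition $\ord_q A_{n_k}(f)=y_k$ becomes precisely $h_k(\vec{a})\neq 0$. Hence $\mathcal{H}_p(\Delta)=\{\vec{a}:\ h_k(\vec{a})\neq 0\ \text{for every vertex }k\}$, so $Z=\bigcup_k\{h_k=0\}$ is the zero set of the single polynomial $h_p(\Delta)(\vec{a}):=\prod_k h_k(\vec{a})$. Finally $h_p(\Delta)\not\equiv 0$, because $\mathcal{H}_p(\Delta)\neq\varnothing$ supplies a point at which every factor is nonzero.

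I expect the main obstacle to be the polynomiality-and-valuation step: verifying that each normalized vertex coefficient reduces to a genuine polynomial over $\F_p$ whose nonvanishing is equivalent to $\mathrm{NP}(f)$ meeting the corresponding Hodge vertex. This is exactly the content supplied by Dwork's $p$-adic cohomological machinery and Wan's decomposition theory, and it is where the rationality and finiteness of the Frobenius action, together with the correct identification of the mod-$p$ reductions, must be handled with care; by contrast the irreducibility and density statements are purely formal.
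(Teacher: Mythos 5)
Your proposal is correct and follows essentially the same route as the paper: the density claim is the formal observation that a nonempty Zariski-open subset of the irreducible space $\mathcal{M}_p(\Delta)$ is dense, and the hypersurface claim comes from extracting, for each vertex of the Hodge polygon, a polynomial over $\F_p$ whose nonvanishing is equivalent to the Newton polygon touching that vertex, then taking the product over all vertices --- exactly the construction the paper carries out in its subsection on computing Hasse polynomials, where $h_p(\Delta)=\prod_{k=0}^{nD}h_p(\Delta,k)$. The one place where your formulation diverges slightly is that you propose to normalize and reduce the L-function coefficients $A_{n_k}(f)$ directly; since $L^*(f,T)^{(-1)^{n-1}}$ is an alternating product of Fredholm determinants, the polynomiality of that reduction is not transparent. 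The paper instead applies Wan's theorem to replace the condition $\mathrm{NP}(f)=\mathrm{HP}(\Delta)$ by the condition that the $p$-adic Newton polygon of $\det(I-TA_1(f))$ meets $P(\Delta)$ at each vertex, and there the block form of $A_1(f)$ exhibits the normalized vertex coefficient $c_{j'}$ as $\xi^{\sum i W_\Delta(i)}$ times the finite determinant $\det(A_{ij})_{0\le i,j\le k}$ plus higher-order terms, so the mod-$p$ reduction is visibly a polynomial in $\vec{a}$. This is precisely the ``polynomiality-and-valuation step'' you flag as the main obstacle; routing it through the Fredholm determinant rather than the L-function coefficients is how the paper discharges it.
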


Similarly, let $h_p(\Delta, k)$ be a Hasse polynomial of slope $k/D$ side and let $h_p(\Delta, \leq k)$ be a Hasse polynomial of all sides with slope $\leq k/D$. For a non-degenerate Laurent polynomial $f$ with $\Delta(f)=\Delta$, Hasse polynomials determine the amount of coincidence between NP($f$) and HP($\Delta$).

\subsection{Dwork's trace formula} 
Let $p$ be a prime and $q=p^a$ for some positive integer $a$. Let $\Q_p$ be the field of $p$-adic numbers and $\Omega$ be the completion of $\overline{\Q}_p$. Pick a fixed primitive $p$-th root of unity in $\Omega$, denoted by $\zeta_p$. In $\Q_p(\zeta_p)$, choose a fixed element $\pi$ satisfying
$$\sum^{\infty}_{m=0}\frac{\pi^{p^m}}{p^m}=0 \ \ \text{and} \ \ \text{ord}_p \pi =\frac{1}{p-1}.$$ By Krasner's lemma, it's easy to check $Q_p(\pi)=Q_p(\zeta_p)$. Let $K$ be an unramified extension of $\mathbb{Q}_p$ of degree $a$ and $\Omega_a$ be the compositum of $Q_p(\zeta_p)$ and $K$. 
\begin{center}
\begin{tikzpicture}[node distance = 1.5cm, auto]
      \node (Q) {$\mathbb{Q}_p$};
      \node (K) [above of=Q, left of=Q] {$K$};
      \node (Z) [above of=Q, right of=Q] {$Q_p(\pi)$};
      \node (O) [above of=Q, node distance = 3cm] {$\Omega_a$};
      \draw[-] (Q) to node {$a$} (K);
      \draw[-] (Q) to node [swap] {$p-1$} (Z);
      \draw[-] (K) to node {$$} (O);
      \draw[-] (Z) to node [swap] {$$} (O);
      \end{tikzpicture}
\end{center}
Lift the Frobenius automorphism $x\mapsto x^p$ of Gal($\mathbb{F}_q/\mathbb{F}_p$) to a generator $\tau$ of Gal($K/\mathbb{Q}_p$) and extend it to $\Omega_a$ by requiring $\tau(\pi)=\pi$. For the primitive $(q-1)$-th root of unity $\zeta_{q-1}$, we have $\tau(\zeta_{q-1})=\zeta_{q-1}^p$.

Let $E_p(t)$ be the Artin-Hasse exponential series:
$$E_p(t)=\exp\left(\sum^{\infty}_{m=0}\frac{t^{p^m}}{p^m}\right)=\sum_{m=0}^{\infty}\lambda_m t^m.$$ Based on Dwork's lemma, it's easy to check that the coefficients of $E_p(t)$ are $p$-adic integers, i.e., $E_p(t)\in \mathbb{Z}_p[[x]]$. After simple calculation, we have
$$\lambda_m=
\begin{cases}\frac{1}{m!},& \text{if} \ 0\leq m \leq p-1,\\
\frac{1}{m!}+ \frac{1}{p(m-p)!} ,& \text{if} \ p\leq m \leq 2p-1.
\end{cases}$$
In Dwork's terminology, a splitting function $\theta(t)$ is defined to be,
$$\theta(t)=E(\pi t)=\sum_{m=0}^{\infty}\lambda_m\pi^mt^m.$$
Note that $\theta(1)$ is a primitive $p$-th root of unity that can be identified with $\zeta_p$ in $\Omega$.

\indent \ \  Consider a Laurent polynomial $f\in $ $\F_q[x_1^{\pm1},x_2^{\pm1}, \ldots, x_n^{\pm1}]$ given by
 $$f=\sum_{j=1}^J \bar{a}_j x^{V_j}$$ 
where $V_j \in {\Z}^n$ and $\bar{a}_j \in \F_q^{*}$. Let $a_j$ be the Teichm\"{u}ller lifting of $\bar{a}_j $ in $\Omega$ satisfying $a_j^q=a_j$. Let
$$F(f,x)=\prod_{j=1}^J\theta(a_j x^{V_j})=\sum_{r \in {\Z}^n} F_r(f)x^r$$
 with coefficients given by 
 \begin{equation}\label{eq3}
 F_r(f)=\sum_u (\prod^{J}_{j=1} \lambda_{u_j} a_j^{u_j}) \pi^{u_1+\ldots+u_{J}}
 \end{equation}
 where the sum is over all the solutions of the following linear system
$$\sum^{J}_{j=1}u_jV_j=r, \  u_j \in \Z_{\geq 0} $$ and $\lambda_m$ is $m$-th coefficient of the Artin-Hasse exponential series $E_p(t)$.

Let $\Delta$ be the Newton polyhedron of $f$ and $L(\Delta)=\mathbb{Z}^{n}\cap C(\Delta)$ be the set of lattice points in the closed cone generated by origin and $\Delta$. Recall that for a given point $r\in \mathbb{R}^n$, the weight function is defined to be $$w(r): =\inf_{\vec{u}}\left \{ \sum_{j=1}^J u_j |\sum_{j=1}^J u_jV_j=r,\ u_j\in \R_{\geq 0}\right\}.$$ Then let $A_1(f)$ be an infinite matrix whose rows and columns are indexed by the lattice points in $L(\Delta)$ with respect to the weights: 
\begin{equation}\label{eq4}
A_1(f)=(a_{r,s}(f))=(F_{ps-r}(f)\pi^{w(r)-w(s)})
\end{equation}
where $r, s\in L(\Delta)$. In Dwork's terminology, $A_1(f)$ is called the infinite semilinear Frobenius matrix.

By Dwork's trace formula, we can identify the associated L-function with a product of Fredholm determinants,
$$L^{*}(f,T)^{(-1)^{n-1}}=\prod_{i=0}^{n}\text{det}\left(I-Tq^{i}A_a(f)\right)^{(-1)^i\binom{n}{i}}$$ 
where $A_a(f)=A_1A_1^{\tau}\cdots A_1^{\tau^{a-1}}$ is the infinite linear Frobenius matrix.

Based on the fact that $\ord_p F_r(f)\geq \frac{w(r)}{p-1}$, we have the estimate
$$\ord_p( a_{r,s}(f)) \geq \frac{w(ps-r)+w(r)-w(s)}{p-1}\geq w(s).$$ Let $\xi$ be an element in $\Omega$ satisfying $\xi^D=\pi^{p-1}$. Then $A_1(f)$ can be written in the following block form,
\begin{equation*}
A_1(f)
=\begin{pmatrix}
A_{00} &  \xi A_{01}  & \cdots\ & {\xi}^iA_{0i}&\cdots\\
A_{10} &  \xi A_{11}  & \cdots\ & {\xi}^iA_{1i}&\cdots\\
 \vdots & \vdots & \ddots  & \vdots  \\
 A_{i0} &  \xi A_{i1}  & \cdots\ & {\xi}^iA_{ii}&\cdots\\
 \vdots & \vdots & \ddots  & \vdots
\end{pmatrix}
\end{equation*}
where the block $A_{ii}$ is a $p$-adic integral $W_{\Delta}(i) \times W_{\Delta}(i)$ matrix and $W_{\Delta}(i)=\#\{u \in \Z^n | w(u)= \frac{i}{D}\}$.

\begin{Def} Let $P(\Delta)$ be a polygon in $\mathbb{R}^2$ with vertices (0,0) and 
$$\left(\sum^k_{i=0}W_{\Delta}(i), \frac{1}{D}\sum^k_{i=0} i W_{\Delta}(i)\right),  \ k=0,1,2,\ldots$$ 
\end{Def}
Generally, $P(\Delta)$ is identified with the chain level version of Hodge bound which is the lower bound of the Newton polygon. Based on the block form of $A_1(f)$, we have the following result.
\begin{Prop}\label{prop29} Let $f$ be a Laurent polynomial with $\Delta(f)=\Delta$, then
\begin{enumerate}
\item The $p$-adic Newton polygon of $\det(I-TA_1(f))$ lies above  $P(\Delta$).
\item The $q$-adic Newton polygon of $\det(I-TA_a(f))$ lies above $P(\Delta)$.
\end{enumerate}
\end{Prop}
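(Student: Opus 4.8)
The plan is to read off both Newton polygon bounds directly from the entrywise valuation estimate $\ord_p(a_{r,s}(f)) \ge w(s)$ established just above, combined with the Fredholm (nuclear determinant) expansion of the characteristic series. Throughout, write $m_N = \min_{|S| = N}\sum_{s \in S} w(s)$, the minimum taken over $N$-element subsets $S$ of the index set $L(\Delta)$. Since $L(\Delta)$ contains exactly $W_\Delta(i)$ points of weight $i/D$, this minimum is realized by taking the $N$ points of smallest weight; hence at $N = \sum_{i=0}^k W_\Delta(i)$ one gets $m_N = \frac{1}{D}\sum_{i=0}^k i\,W_\Delta(i)$, and for intermediate $N$ the value $m_N$ interpolates linearly. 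In other words, the piecewise-linear function $N \mapsto m_N$ is exactly $P(\Delta)$.

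For part (1), the nuclearity furnished by the estimate lets me expand
\[
\det(I - TA_1(f)) = \sum_{N \ge 0} (-T)^N \sum_{|S| = N} \det\big((A_1(f))_{S,S}\big),
\]
so the coefficient $c_N$ of $(-T)^N$ is a sum of $N\times N$ principal minors. Expanding each minor over permutations $\sigma$ of $S$, every term satisfies $\ord_p \prod_{r\in S} a_{r,\sigma(r)} \ge \sum_{r \in S} w(\sigma(r)) = \sum_{s\in S} w(s)$, because $\sigma$ is a bijection of $S$ and the estimate $\ord_p a_{r,s}\ge w(s)$ depends only on the column index. Hence $\ord_p \det((A_1)_{S,S}) \ge \sum_{s\in S} w(s) \ge m_N$, and so $\ord_p c_N \ge m_N$. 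Since $P(\Delta)$ is convex and each point $(N, \ord_p c_N)$ lies on or above it, the lower convex hull---which is the $p$-adic Newton polygon of $\det(I - TA_1(f))$---lies above $P(\Delta)$.

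For part (2), I would treat $A_a(f) = A_1 A_1^{\tau} \cdots A_1^{\tau^{a-1}}$ and apply the same Fredholm expansion together with the generalized Cauchy--Binet formula for minors of a product, which writes each principal minor of $A_a$ as
\[
\det\big((A_a)_{S,S}\big) = \sum_{S_1,\ldots,S_{a-1}} \ \prod_{j=0}^{a-1} \det\big((A_1^{\tau^j})_{S_j, S_{j+1}}\big), \qquad S_0 = S_a = S,
\]
the sum running over $N$-element index sets $S_1, \ldots, S_{a-1}$. Because $\tau$ fixes $\pi$ and preserves both $\ord_p$ and the weight indexing, each factor obeys $\ord_p \det((A_1^{\tau^j})_{S_j, S_{j+1}}) \ge \sum_{t \in S_{j+1}} w(t) \ge m_N$, exactly as in part (1). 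The crucial point is that there are $a$ such factors, each contributing at least $m_N$, so every Cauchy--Binet summand has $\ord_p \ge a\,m_N$; hence $\ord_p c_N^{(a)} \ge a\,m_N$ for the coefficient $c_N^{(a)}$ of $\det(I - TA_a(f))$. Dividing by $a$ gives $\ord_q c_N^{(a)} \ge m_N$, and convexity of $P(\Delta)$ again yields that the $q$-adic Newton polygon lies above $P(\Delta)$.

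The routine parts are the permutation and Cauchy--Binet bookkeeping and the verification that $N \mapsto m_N$ really is $P(\Delta)$. The step carrying the real content---and the one I expect to be the main obstacle to state cleanly---is the passage from $A_1$ to $A_a$ in part (2): a naive entrywise bound on the product only yields $\ord_p (A_a)_{r,s} \ge w(s)$ and hence the far too weak $\ord_q (A_a)_{r,s} \ge w(s)/a$. It is precisely the Cauchy--Binet expansion of the whole minor, rather than of individual entries, that exposes the factor $a$ needed to convert the $p$-adic bound for $A_1$ into the correct $q$-adic bound for $A_a$. I would also check that these manipulations are legitimate for genuinely infinite matrices, i.e. that the relevant operators are nuclear so that the Fredholm determinants and the infinite Cauchy--Binet sums converge $p$-adically; this follows from $\ord_p a_{r,s}\ge w(s)$ together with the polynomial growth of $W_\Delta(i)$ in $i$.
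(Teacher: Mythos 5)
Your proposal is correct and is essentially the standard argument that the paper implicitly relies on (the paper states the proposition as a consequence of the block form of $A_1(f)$, i.e.\ of the column estimate $\ord_p(a_{r,s})\geq w(s)$, without writing out the details). Your Fredholm expansion into principal minors for part (1), and the Cauchy--Binet expansion of minors of the product $A_a=A_1A_1^{\tau}\cdots A_1^{\tau^{a-1}}$ to extract the factor $a$ in part (2), correctly supply exactly the bookkeeping that the block decomposition is meant to encode.
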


\subsection{General method for computing Hasse polynomials}

In this subsection, we use the same notations as the previous subsection and provide a standard way to compute a Hasse polynomial which represents the ordinary property of a given non-degenerate Laurent polynomial. The following theorem relates the ordinary property of $f$ to the $p$-adic Newton polygon of $\det(I-TA_1(f))$.  
\begin{Th}[\cite{Dwan2004}]Let $f$ be a Laurent polynomial of $n$ variables with $\Delta=\Delta(f)$. Assume that the L-function $L^*(f,T)^{(-1)^{n-1}}$ is a polynomial. Then 
\begin{enumerate}
\item NP($f$) = HP($\Delta$) if and only if the $q$-adic Newton polygon of $\det(I-TA_a(f))$ coincides with its lower bound P($\Delta$) if and only if the $p$-adic Newton polygon of $\det(I-TA_1(f))$ coincides with its lower bound P($\Delta$).
\item NP($f$) coincides with HP($\Delta$) for all sides with slopes $\leq k/D$ if and only if the $p$-adic Newton polygon of $\det(I-TA_1(f))$ computed with respect to $p$ coincides with P($\Delta$) for the sides with slopes $\leq k/D$.
\end{enumerate}
\end{Th}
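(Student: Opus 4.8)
The plan is to route everything through Dwork's trace formula
$$L^*(f,T)^{(-1)^{n-1}}=\prod_{i=0}^{n}\det\left(I-Tq^{i}A_a(f)\right)^{(-1)^i\binom{n}{i}},$$
which already expresses NP($f$) as an alternating combination of the $q$-adic data of the single Fredholm determinant $\delta(T):=\det(I-TA_a(f))$. Writing $\delta(T)=\prod_j(1-\gamma_jT)$ with $\sigma_j:=\ord_q\gamma_j$, the $i$-th factor contributes reciprocal roots $q^i\gamma_j$ of slope $i+\sigma_j$, so the net multiplicity of slope $\mu$ in $L^*(f,T)^{(-1)^{n-1}}$ is the $n$-fold backward difference
$$M(\mu)=\sum_{i=0}^{n}(-1)^i\binom{n}{i}N(\mu-i),\qquad N(\sigma):=\#\{j:\sigma_j=\sigma\}.$$
Because the L-function is assumed to be a polynomial, each $M(\mu)$ is a genuine non-negative integer equal to the horizontal length of the slope-$\mu$ side of NP($f$); this is the only place the polynomiality hypothesis is used, and it is what lets me treat the formal alternating sum as an honest slope count.

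Before exploiting this, I would dispose of the two inner equivalences in part (1) by the standard semilinear-to-linear reduction. Since $\tau$ fixes $\pi$ and carries Teichm\"{u}ller units to their $p$-th powers, each conjugate $A_1^{\tau^m}$ has exactly the same valuation pattern as $A_1$; combined with the norm relation $A_a=A_1A_1^{\tau}\cdots A_1^{\tau^{a-1}}$ and the normalization $\ord_q=\tfrac1a\ord_p$, this forces the $q$-adic Newton polygon of $\det(I-TA_a)$ and the $p$-adic Newton polygon of $\det(I-TA_1)$ to carry identical slopes. This identification reduces both parts of the theorem to a statement purely about $\delta(T)$ and P($\Delta$).

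The heart of the argument is the combinatorial dictionary between P($\Delta$) and HP($\Delta$). If the $q$-adic Newton polygon of $\delta$ coincides with P($\Delta$), then its slope-$k/D$ side has length $N(k/D)=W_\Delta(k)$, and substituting into the difference formula gives $M(k/D)=\sum_{i=0}^{n}(-1)^i\binom{n}{i}W_\Delta(k-iD)=H_\Delta(k)$, which is exactly the length of the slope-$k/D$ side of HP($\Delta$); hence NP($f$)=HP($\Delta$). This step is local in the slope: a root of slope $\le k/D$ can only arise from a $\gamma_j$ of slope $\le k/D$ since $i\ge 0$, so the slope-$\le k/D$ part of NP($f$) depends only on the slope-$\le k/D$ part of $\delta$, which is precisely what will give the refined assertion in part (2).

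For the converse I would observe that, once one imposes $N(\sigma)=0$ for $\sigma<0$ (valid because both P($\Delta$) and the lower bound NP($\delta$)$\ge$P($\Delta$) from Proposition \ref{prop29} have non-negative slopes), the recursion
$$N(k/D)=M(k/D)+\sum_{i=1}^{n}(-1)^{i+1}\binom{n}{i}N\big((k-iD)/D\big)$$
recovers $N(k/D)$ from $M(k/D)$ and strictly smaller slopes, so $N$ is uniquely determined by $M$. Since $W_\Delta$ maps to $H_\Delta$ under the difference operator, NP($f$)=HP($\Delta$), i.e. $M(k/D)=H_\Delta(k)$ for all $k$, forces $N(k/D)=W_\Delta(k)$ for all $k$, that is NP($\delta$)=P($\Delta$); truncating the triangular recursion at slope $k/D$ yields part (2). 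I expect the main obstacle to lie not in this linear algebra but in the surrounding analytic bookkeeping: $\delta(T)$ is entire of infinite degree while $L^*(f,T)^{(-1)^{n-1}}$ is a polynomial, so one must justify that the alternating product collapses the infinitely many high-slope Fredholm roots and that the honest multiplicities agree with the formal differences, and one must pin down the normalization matching $\ord_p$ on $A_1$ with $\ord_q$ on $A_a$ exactly.
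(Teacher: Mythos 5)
The paper does not actually prove this statement: it is Theorem 2.10, quoted verbatim from Wan \cite{Dwan2004}, so there is no internal proof to compare against and your attempt has to be measured against the standard argument in that reference. Measured that way, your architecture is the right one and matches Wan's: Dwork's trace formula turns NP($f$) into the $n$-fold alternating difference $M(\mu)=\sum_{i=0}^{n}(-1)^i\binom{n}{i}N(\mu-i)$ of the slope counts of the single Fredholm determinant, the polynomiality of $L^*(f,T)^{(-1)^{n-1}}$ legitimizes the multiset subtraction, the identity $H_\Delta(k)=\sum_i(-1)^i\binom{n}{i}W_\Delta(k-iD)$ gives the forward implication, and the triangular recursion with the initial condition $N(\sigma)=0$ for $\sigma<0$ gives the converse and, after truncation at slope $k/D$, part (2). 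Your observation that slopes $\le k/D$ of NP($f$) only see slopes $\le k/D$ of the Fredholm determinant is exactly the locality that makes part (2) work.

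Two steps, however, are asserted rather than proved, and one of them as stated is not a valid deduction. First, the semilinear-to-linear reduction: the fact that each conjugate $A_1^{\tau^m}$ has the same entry-wise valuations as $A_1$ does \emph{not} by itself force the $q$-adic Newton polygon of $\det(I-TA_a)$ to equal the $p$-adic Newton polygon of $\det(I-TA_1)$ --- entry-wise valuation data of factors does not control the valuations of a product's Fredholm coefficients from below. The correct tool is Dwork's lemma on semilinear operators, namely $\det(I-T^aA_a)=\prod_{\zeta^a=1}\det(I-\zeta T\alpha)$ for the $\tau$-semilinear Frobenius $\alpha$ with $\alpha^a=A_a$, from which the equality of the two polygons (under the normalization $\ord_q=\tfrac1a\ord_p$) follows; you need to invoke or prove that lemma rather than the ``same valuation pattern'' heuristic. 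Second, your converse recursion pins down $N$ uniquely only if $W_\Delta$ satisfies the recursion for \emph{all} $k$, which requires $\sum_{i=0}^{n}(-1)^i\binom{n}{i}W_\Delta(k-iD)=0$ for $k>nD$; this is the generating-function identity $\sum_k W_\Delta(k)t^k=\bigl(\sum_{k=0}^{nD}H_\Delta(k)t^k\bigr)/(1-t^D)^n$ for the cone, a genuine combinatorial input that should be cited explicitly rather than left implicit. With those two lemmas supplied, your outline is a correct reconstruction of the cited proof.
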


Recall that the Hasse domain $\mathcal{H}_p(\Delta)=\{g\in \mathcal{M}_p(\Delta) | \text{NP} (g) = \text{HP}(\Delta)\}$ is a Zariski open subset of $\mathcal{M}_p(\Delta)$. If $\mathcal{H}_p(\Delta)$ is not empty, its complement is a hypersurface determined by Hasse polynomial $h_p(\Delta)$. Similarly, $h_p(\Delta, k)$ defines the complement of $\mathcal{H}_p(\Delta, k)=\{g\in \mathcal{M}_p(\Delta) | \text{NP} (g) = \text{HP}(\Delta)$ for slope-$k/D$ side$\}$. Let det($I-A_1(f)$) = $\sum_{j=0}^{\infty}c_jT^j$. Newton polygon of det($I-A_1(f)$) computed with respect to $p$ is the lower convex closure of \{$(j,\ord_p(c_j))$ $|$ $k=0,1,\ldots \} $. Let $(j, P(\Delta, j))$ be a point on $P(\Delta)$ for $j\in \mathbb{Z}_{\geq 0}$. By Proposition \ref{prop29}, we have $\mathrm{ord}_p(c_j) \geq P(\Delta, j)$. Let $j'=\sum^k_{i=0}W_{\Delta}(i)$. From the block form of $A_1(f)$, it's easy to check that 
$$c_{j'}=\xi^{\sum^k_{i=0}i W_{\Delta}(i)}\text{det}
\begin{pmatrix}
A_{00} &  A_{01}  & \cdots\ & A_{0k}\\
A_{10} &  A_{11}  & \cdots\ & A_{1k}\\
 \vdots & \vdots & \ddots  & \vdots  \\
 A_{k0} &  A_{k1}  & \cdots\ & A_{kk}
\end{pmatrix}
+\xi^{1+\sum^k_{i=0}i W_{\Delta}(i)}\cdot u_{j'} 
$$ where $u_{j'}$ is a p-adic integer and $\xi^{D}=-p$. So $\text{ord}_p(c_{j'})\geq \frac{1}{D}\sum^k_{i=0}i W_{\Delta}(i)=P(\Delta,j')$ and $\text{ord}_p(c_{j'}) = P(\Delta,j')$ if and only if
\begin{equation}\label{eq1}
h_p(\Delta, k): =\mathrm{det}\begin{pmatrix}
A_{00} &  A_{01}  & \cdots\ & A_{0k}\\
A_{10} &  A_{11}  & \cdots\ & A_{1k}\\
 \vdots & \vdots & \ddots  & \vdots  \\
 A_{k0} &  A_{k1}  & \cdots\ & A_{kk}
\end{pmatrix} \not\equiv 0 \mod p.
\end{equation}
Consequently, we have the following formula for computing the Hasse polynomial,\\
\begin{equation}
h_p(\Delta)=\prod^{nD}_{k=0}h_p(\Delta, k).
\end{equation} 

\subsection{Wan's facial decomposition theorem and Zhang-Feng's result}
Wan develops some decomposition theorems to simplify the computation by decomposing the polyhedron $\Delta$ to small pieces \cite{Dwan2004}. We will use the following facial decomposition to compute the Hasse polynomial.

\begin{Th}[Facial Decomposition Theorem\cite{Dwan1993}]\label{th211}
Let $f$ be a non-degenerate Laurent polynomial over $\mathbb{F}_q$. Assume $\Delta(f)=\Delta$ is $n$-dimensional and $\delta_1,\ldots, \delta_h$ are all the co-dimension 1 faces of $\Delta$ which don't contain the origin. Let $f^{\delta_i}$ denote the restriction of $f$ to $\delta_i$. Then $f$ is ordinary if and only if $f^{\delta_i}$ is ordinary for $1\leq i\leq h$. 

Moreover for positive integer $k$, the Newton polygon of $\det(I-TA_1(f))$ coincides with its lower bound at the $k$-th vertex if and only if the Newton polygon of det$(I-TA_1(\delta_i, f^{\delta_i}))$ coincides with its lower bound at the $k$-th vertex for $i=1,\ldots,h$.
\end{Th}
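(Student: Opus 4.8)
The plan is to reduce the statement to the determinant criterion recorded in \eqref{eq1} and then to show that the relevant principal determinant factors according to the facets $\delta_1,\ldots,\delta_h$. By the theorem of Wan quoted above, $f$ is ordinary (resp.\ the Newton polygon of $\det(I-TA_1(f))$ meets its lower bound $P(\Delta)$ at the $k$-th vertex) precisely when the $p$-adic order of the coefficient $c_{j'}$ of $\det(I-TA_1(f))$ equals $P(\Delta,j')$ at the corresponding break point, equivalently when the ``Hasse determinant'' $h_p(\Delta,k)$ of \eqref{eq1} is nonzero modulo $p$. So it suffices to prove that, modulo $p$, this determinant is (up to a unit) the product of the analogous determinants formed from $A_1(\delta_i,f^{\delta_i})$ over $i=1,\ldots,h$; non-vanishing of a product over $\overline{\F}_p$ is equivalent to non-vanishing of each factor, which yields both assertions of the theorem at once.

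The geometric input is the decomposition of the cone. Since $\Delta$ is $n$-dimensional and $\delta_1,\ldots,\delta_h$ are exactly its codimension-one faces missing the origin, one has $C(\Delta)=\bigcup_{i=1}^h C(\delta_i)$, where $C(\delta_i)$ is the cone on $\delta_i$; these subcones have disjoint interiors and meet only along lower-dimensional cones over the shared faces $\delta_i\cap\delta_j$. The weight function $w$ is positively homogeneous and convex, hence linear on each $C(\delta_i)$, and on $C(\delta_i)$ it is computed using only the monomials of $f$ lying on $\delta_i$. I would first make this precise and record the consistency of $w$ on the overlaps.

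The analytic heart is a weight estimate for the entries $a_{r,s}(f)=F_{ps-r}(f)\,\pi^{w(r)-w(s)}$. Using $\ord_p F_{ps-r}\ge w(ps-r)/(p-1)$ together with the subadditivity $w(ps-r)+w(r)\ge w(ps)=p\,w(s)$, one gets $\ord_p a_{r,s}\ge w(s)$, and the key point is the equality analysis: equality forces the convexity inequality $w((ps-r)+r)=w(ps-r)+w(r)$ to be sharp, which for the gauge of $\Delta$ happens only when $ps-r$ and $r$ lie in a common linear piece, i.e.\ when $r$ and $s$ lie in the same subcone $C(\delta_i)$. Moreover, when they do, the leading coefficient of $a_{r,s}$ involves only the Teichm\"uller coefficients $a_j$ with $V_j\in\delta_i$, i.e.\ only the data of $f^{\delta_i}$. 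Consequently, after ordering the lattice points of $L(\Delta)$ by weight and grouping them by facet, the reduction of the Frobenius matrix modulo its higher-order terms becomes block-diagonal, with the block attached to $\delta_i$ equal (up to the normalizing powers of $\xi$) to the corresponding graded piece of $A_1(\delta_i,f^{\delta_i})$. This gives the desired factorization of $h_p(\Delta,k)$ and completes the argument.

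The step I expect to be the main obstacle is the bookkeeping of the lattice points lying on the shared cones $C(\delta_i)\cap C(\delta_j)$. Such a point is counted once in $W_\Delta$ but appears in the cones of several facets, so a naive block-diagonalization over-counts and the per-facet blocks are not literally $A_1(\delta_i,f^{\delta_i})$ on the full closed cone. Handling this cleanly requires a consistent tie-breaking that assigns each lattice point to a single facet (using half-open cones), together with an induction on dimension showing that the entries coupling distinct facets through a shared face $\delta_i\cap\delta_j$ occur at strictly higher $\pi$-order and hence do not affect the leading determinant; the non-degeneracy hypothesis is what guarantees that the resulting diagonal blocks detect exactly the ordinariness of each $f^{\delta_i}$. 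Making this reduction rigorous, rather than the weight estimate itself, is the crux of the proof.
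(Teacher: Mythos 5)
The paper does not actually prove this statement: it is quoted verbatim as Wan's Facial Decomposition Theorem with a citation to \cite{Dwan1993}, so there is no internal proof to compare against and your sketch has to stand on its own. Your overall strategy is the standard one for such decomposition theorems --- reduce ordinarity at a vertex to the non-vanishing mod $p$ of the leading principal minor $h_p(\Delta,k)$ from \eqref{eq1}, decompose $C(\Delta)=\bigcup_{i=1}^h C(\delta_i)$, and exploit the equality analysis in the estimate $\ord_p(a_{r,s})\geq \frac{w(ps-r)+w(r)-w(s)}{p-1}\geq w(s)$, where equality forces $ps-r$ and $r$ (hence $r$ and $s$) to lie in a common closed subcone. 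That much is correct and is indeed the right starting point.

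However, the step you yourself flag as the crux is a genuine gap, not bookkeeping, and the repair you propose does not work. Write $\Delta_i$ for the convex hull of $\delta_i$ and the origin, so that $A_1(\delta_i,f^{\delta_i})$ is indexed by all of $L(\Delta_i)=\Z^{n}\cap C(\delta_i)$. First, a lattice point on $C(\delta_i)\cap C(\delta_j)$ is indexed in several of the per-facet matrices but only once in $A_1(f)$, so $\sum_i W_{\Delta_i}(m)>W_{\Delta}(m)$ whenever such points exist; the determinant $h_p(\Delta,k)$ therefore cannot literally equal $\prod_i h_p(\Delta_i,k)$ --- even the degrees in $\vec{a}$ disagree --- and non-vanishing of the product is not formally equivalent to non-vanishing of the single minor. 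Second, the proposed fix (assign each shared point to one facet by a tie-break and claim the cross-facet entries occur at strictly higher $\pi$-order) is false: if $r\in C(\delta_i)\cap C(\delta_j)$ and $s$ lies in the interior of $C(\delta_j)$, then $r$ and $s$ lie in the common closed cone $C(\delta_j)$, so $a_{r,s}$ and $a_{s,r}$ can attain the minimal orders $w(s)$ and $w(r)$; whichever facet the tie-break assigns $r$ to, some off-diagonal block survives at leading order, and the leading form of the minor is not block-triangular under any assignment. A permutation contributing to the leading term may therefore have cycles that pass through shared points and wander between facets, and ruling out (or reorganizing) these contributions is exactly where Wan's proof needs further machinery (a finer decomposition along the shared lower-dimensional cones and an induction on faces). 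Until that is supplied, neither implication of the theorem follows from your argument.
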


Consider the following family of Laurent polynomials over finite field $\F_{q}$:
$$f(x_1,\ldots ,x_{n+1})=\sum_{i=1}^na_i x_{n+1}\left(x_i+\frac{1}{x_i}\right)+a_{n+1} x_{n+1}+\frac{1}{x_{n+1}}$$ 
where $a_i \in \F^*_{q}$, $i=1,2, \ldots, n+1$. Then $\Delta(f)$ is a polyhedron in $\mathbb{R}^{n+1}$ and let $\Delta_n = \Delta(f)$. Based on Wan's decomposition theorem \cite{Dwan1993}, Zhang and Feng\cite{Zhang2013OnLO} gave the following results.

\begin{Th}[\cite{Zhang2013OnLO}]\label{th212} Let $f$ be a Laurent polynomial in that family with $\Delta_n=\Delta(f)$.
\begin{enumerate}
\item Then $f$ is non-degenerate if and only if $\pm2a_1\pm2a_2 \ldots \pm 2a_n+a_{n+1}\neq 0$, i.e.,\\
 $\mathcal{M}_p(\Delta_n)=\{a\in {\bar{\F}_p}^{n+1} | a_1 \cdots a_{n+1} \prod (\pm2a_1\pm2a_2 \ldots \pm 2a_n+a_{n+1})\neq 0\}.$
 \item If $f\in \mathcal{M}_p(\Delta_n)$, the associated L-function $L^{*}(f,T)^{(-1)^n} $ is a polynomial of degree $2^{n+1}$, i.e., $(n+1)!\mathrm{Vol}(\Delta_n)=2^{n+1}$.
 \item The Hodge polygon HP($\Delta_n$) is a lower convex polygon in $\R^2$ with vertices (0,0) and  
$$\left( \sum^k_{m=0}\binom{n+1}{m}, \sum^k_{m=0}m \binom{n+1}{m}\right), \  k=0,1,\ldots, n+1,$$ 
i.e., $D=1$ and  $H_{\Delta_n}(m)=\binom{n+1}{m}$ for $m=0,1,\ldots, n+1$.\\
\indent \ \ That is, the Hodge polygon HP($\Delta_n$) $\in \mathbb{R}^2$ consists of $n+2$ line segments starting from initial point (0,0) and has sides of increasing slopes, i.e., the $j$-th segment has slope $j-1$ with horizontal length $\binom{n+1}{j-1}$, $j=1,\ldots,n+2$.
\item This family is generically ordinary for any prime $p$, i.e., there exsits a non-zero polynomial $h_p(\Delta_{n})(\vec{a})\in \mathbb{F}_p[a_1,\ldots,a_{n+1}]$ satisfying: if $h_p(\Delta_{n})(\vec{a})\neq0$, the Newton polygon of $L^*(f,T)^{(-1)^n}$ coincides with its lower bound HP($\Delta_n$).
\end{enumerate} 
\end{Th}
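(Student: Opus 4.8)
The plan is to establish the four assertions separately, using throughout the explicit shape of $\Delta_n$. Write a lattice point as $(y,t)$ with $y=(y_1,\dots,y_n)$ recording the exponents of $x_1,\dots,x_n$ and $t$ the exponent of $x_{n+1}$. The monomials of $f$ then sit at $(\pm e_i,1)$ (from $a_ix_{n+1}x_i^{\pm1}$), at $(0,1)$ (from $a_{n+1}x_{n+1}$), and at $(0,-1)$ (from $x_{n+1}^{-1}$), so $\Delta_n$ is the pyramid with apex $(0,-1)$ over the base $Q\times\{1\}$, where $Q=\{y:\sum_{i=1}^n|y_i|\le 1\}$ is the standard cross-polytope. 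The origin lies in the interior of this pyramid, hence $C(\Delta_n)=\R^{n+1}$ and the weight function is the Minkowski gauge of $\Delta_n$. Its facets are the base $\{t=1\}$ and the $2^n$ simplices $\mathrm{conv}\bigl((0,-1),(\epsilon_1e_1,1),\dots,(\epsilon_ne_n,1)\bigr)$, $\epsilon\in\{\pm1\}^n$, with supporting functionals $t$ and $2\sum_i\epsilon_iy_i-t$; taking the maximum over facets I would record
\[
w(y,t)=\max\Bigl(t,\;2\sum_{i=1}^n|y_i|-t\Bigr),
\]
which is the computational engine for the rest.

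For (1) I would apply the non-degeneracy criterion face by face. The base restriction is $f^{\mathrm{top}}=x_{n+1}\bigl(\sum_i a_i(x_i+x_i^{-1})+a_{n+1}\bigr)$, whose vanishing partials force $x_i=\pm1$ and then $\pm2a_1\pm\cdots\pm2a_n+a_{n+1}=0$, reproducing exactly the stated condition. Every side facet, and every proper subface of $Q$, carries only single monomials $a_ix_{n+1}x_i^{\epsilon_i}$ in each torus variable (together with $x_{n+1}^{-1}$), whose partial derivatives never vanish on the torus; the combination $x_i+x_i^{-1}$ responsible for a vanishing partial appears only on the full base, since $e_i$ and $-e_i$ never lie in a common proper face of $Q$. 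Hence non-degeneracy is equivalent to the displayed product being nonzero. For (2) I compute the pyramid volume $\mathrm{Vol}(\Delta_n)=\frac{1}{n+1}\cdot\mathrm{Vol}_n(Q)\cdot 2=\frac{1}{n+1}\cdot\frac{2^n}{n!}\cdot 2=\frac{2^{n+1}}{(n+1)!}$, so that $(n+1)!\,\mathrm{Vol}(\Delta_n)=2^{n+1}$; the Adolphson--Sperber theorem then gives that $L^*(f,T)^{(-1)^n}$ is a polynomial of this degree.

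For (3) the formula for $w$ shows every lattice weight is an integer, so $D=1$. To get the Hodge numbers I would compute $W_{\Delta_n}(m)$ directly: for fixed $y$ with $s=\sum_i|y_i|$, the equation $\max(t,2s-t)=m$ has $0$, $1$, or $2$ integer solutions $t$ according as $m<s$, $m=s$, or $m>s$. Using $\sum_s r_n(s)z^s=\bigl(\tfrac{1+z}{1-z}\bigr)^n$ for the number $r_n(s)$ of $y\in\Z^n$ with $\sum_i|y_i|=s$, a short summation yields $\sum_m W_{\Delta_n}(m)T^m=\bigl(\tfrac{1+T}{1-T}\bigr)^{n+1}$. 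Since $D=1$, the generating function of the Hodge numbers is $(1-T)^{n+1}\sum_m W_{\Delta_n}(m)T^m=(1+T)^{n+1}$, so $H_{\Delta_n}(m)=\binom{n+1}{m}$, which is exactly the asserted Hodge polygon.

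The hard part is (4), generic ordinariness for every $p$. I would invoke the Facial Decomposition Theorem \ref{th211}: since $f$ is (generically) ordinary if and only if each facet restriction is, it suffices to prove each facet family generically ordinary. The side restrictions, after the monomial substitution $x_i\mapsto x_i^{\epsilon_i}$, all collapse to the single Laurent polynomial $\sum_i a_ix_{n+1}x_i+x_{n+1}^{-1}$, whose Newton polyhedron is a unit-volume simplex; its associated L-function has degree one and is therefore trivially ordinary for all $p$. The base restriction $x_{n+1}\cdot g$ with $g=\sum_i a_i(x_i+x_i^{-1})+a_{n+1}$ has cross-polytope geometry, tying its L-function to products of Kloosterman sums, which are ordinary (classical for odd $p$). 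The genuine difficulty is the \emph{uniformity in $p$}: converting ``each facet ordinary'' into non-vanishing of the Hasse determinant of \eqref{eq1} and then checking this modulo every prime. Its leading $p$-adic term is a polynomial in $\vec a$ whose coefficients are products of Artin--Hasse coefficients $\lambda_m=1/m!$, and these remain $p$-adic units only while $m\le p-1$; so one must exhibit an explicit surviving monomial mod $p$ for small primes as well. This is precisely the computation that the explicit slope-$\le 1$ formula stated above is designed to make transparent, and proving that formula is nonzero mod $p$ is where I expect the real work to lie.
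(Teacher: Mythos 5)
The paper does not actually prove this theorem; it quotes it from Zhang--Feng, so there is no in-paper argument to compare against. Judged on its own, your treatment of parts (1)--(3) is correct and essentially complete: the description of $\Delta_n$ as the pyramid over the cross-polytope $Q\times\{1\}$ with apex $(0,-1)$, the facet functionals $t$ and $2\sum_i\epsilon_iy_i-t$ giving $w(y,t)=\max\bigl(t,\,2\sum_i|y_i|-t\bigr)$ and hence $D=1$, the generating-function identity yielding $H_{\Delta_n}(m)=\binom{n+1}{m}$, the volume count $(n+1)!\,\mathrm{Vol}(\Delta_n)=2^{n+1}$, and the face-by-face non-degeneracy check (the only face carrying both $x_i$ and $x_i^{-1}$ is the full base, since $(0,1)$ is interior to $Q\times\{1\}$ and $\pm e_i$ never share a proper face of $Q$) are all sound.

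Part (4), however, is not proved, and this is the substantive content of the theorem. Your facial decomposition correctly disposes of the $2^n$ side facets (unimodular simplices, degree-one L-factors, trivially ordinary) and reduces everything to the base restriction $g=x_{n+1}\bigl(\sum_i a_i(x_i+x_i^{-1})+a_{n+1}\bigr)$. But from there two things are missing. First, the appeal to Kloosterman sums being ``ordinary (classical for odd $p$)'' is not an argument: $g$ is not the Kloosterman-type polynomial but $x_{n+1}$ times it, living in $n+1$ variables with a different polytope, and even for the genuine Kloosterman family generic ordinariness for every $p$ is a nontrivial theorem (Sperber, Wan), not a classical fact one may cite in passing. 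Second, and decisively, you explicitly defer the non-vanishing modulo $p$ of the Hasse determinants of \eqref{eq1} --- which is precisely the assertion to be proved --- to ``where the real work lies.'' Note also that even granting the slope-$\le 1$ formula of Theorem \ref{m1} and its non-vanishing, that only controls the vertices of the Newton polygon up to slope $1$; the Hodge polygon of $\Delta_n$ has break points at slopes $2,\dots,n$ as well, and the weight-purity symmetry of Lemma \ref{lm1} only pairs the slope-$k$ side with the slope-$(n+1-k)$ side, so for $n\ge 5$ the middle vertices remain untouched by any argument you have sketched. A complete proof must exhibit, for every prime $p$ and every $k$, some $\vec{a}$ with $h_p(\Delta_n,k)(\vec{a})\ne 0$; supplying that uniform-in-$p$ non-vanishing via Wan's decomposition machinery is exactly what Zhang--Feng's proof does and what is absent here.
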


\begin{Th}[\cite{Zhang2013OnLO}]\label{th213}Notations as above.
\begin{enumerate}
\item When $n=2$, a Hasse polynomial can be taken to be
\begin{equation*}
h_p(\Delta_2)(a_1, a_2, a_3) =\sum_{\substack{0 \leq v_1+v_2 \leq \frac{p-1}{2} \\ v_1, v_2 \in \Z_{\geq0}}} \frac{a_1^{2v_1}a_2^{2v_2} a_{3}^{p-1-2(v_1+v_2)}}{({v_1}!{v_2}!)^2(p-1-2(v_1+v_2))!}.
\end{equation*}
where $\vec{a}=(a_1,a_2, a_3)\in \mathcal{M}_p(\Delta_2)$. 
\item When $n=3$, a Hasse polynomial can be taken to be
\begin{equation*}
h_p(\Delta_3)(\vec{a}) =T(\vec{a})\sum_{\substack{0 \leq v_1+v_2+v_3 \leq \frac{p-1}{2} \\ v_1, v_2, v_3 \in \Z_{\geq0}}} \frac{a_1^{2v_1}a_2^{2v_2} a_3^{2v_3} a_{4}^{p-1-2(v_1+v_2+v_3)}}{({v_1}!{v_2}!{v_3}!)^2(p-1-2(v_1+v_2+v_3))!},
\end{equation*}
where $\vec{a}=(a_1,a_2, a_3, a_4)\in \mathcal{M}_p(\Delta_3)$ and $T(\vec{a})$ is the determinant of a $4\times 4$ matrix whose entries are all polynomials.
\end{enumerate}
\end{Th}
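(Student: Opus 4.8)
The plan is to compute $h_p(\Delta_n)$ directly from Dwork--Wan theory through the block-determinant formula of Section 2.5, feeding in the combinatorial data already recorded in Theorem \ref{th212}: for $f\in\mathcal{F}$ one has $D=1$, $W_{\Delta_n}(0)=1$, $W_{\Delta_n}(1)=2n+2$, and Hodge numbers $H_{\Delta_n}(m)=\binom{n+1}{m}$, which are symmetric under $m\leftrightarrow n+1-m$. Since $h_p(\Delta_n)=\prod_{k=0}^{n+1}h_p(\Delta_n,k)$ with each $h_p(\Delta_n,k)$ the block determinant \eqref{eq1}, everything reduces to evaluating these determinants mod $p$. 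First I would establish a Poincar\'e-type duality of the Frobenius matrix $A_1(f)$ matching the symmetric shape of $\mathrm{HP}(\Delta_n)$, so that the slope-$k$ and slope-$(n+1-k)$ factors define the same hypersurface. This organizing principle already explains the structural difference between the two parts: for $n=2$ the center of $\mathrm{HP}(\Delta_2)$ sits between the dual slopes $1$ and $2$, so there is no self-dual middle slope, whereas for $n=3$ the slope-$2$ side is fixed by $k\mapsto n+1-k$ and must be computed on its own. Generic ordinariness, guaranteeing that these determinants are not identically zero, follows by applying the facial decomposition theorem (Theorem \ref{th211}) face by face as in Theorem \ref{th212}(4).

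The slope-$\le1$ factor is the heart of both statements. Since $h_p(\Delta_n,0)=F_0(f)=1$ is a unit, I would focus on $h_p(\Delta_n,1)$, the determinant of the Frobenius block on the weight-$0$ and weight-$1$ lattice points. The $2n+2$ weight-$1$ points are exactly the exponent vectors $V_i^{\pm}=\pm e_i+e_{n+1}$ ($1\le i\le n$), $V_{n+1}=e_{n+1}$ and $V_0=-e_{n+1}$. I would expand each entry $a_{r,s}=F_{ps-r}(f)$ via \eqref{eq3} and reduce mod $p$ using $\lambda_m\equiv 1/m!$. The key observation is that the diagonal entry at $s=r=V_{n+1}$ equals $F_{(p-1)e_{n+1}}(f)$: a monomial reaching $(p-1)e_{n+1}$ must use $V_i^{+}$ and $V_i^{-}$ equally often, say $v_i$ times, to cancel each $e_i$-coordinate, and must use $V_{n+1}$ exactly $p-1-2\sum v_i$ times to reach height $p-1$ in the $e_{n+1}$-coordinate. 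This produces precisely the summand $a_i^{2v_i}/(v_i!)^2\cdot a_{n+1}^{p-1-2\sum v_i}/(p-1-2\sum v_i)!$, with $\sum v_i\le\frac{p-1}{2}$ forced by nonnegativity, every term carrying the same power $\pi^{p-1}$. Taking the Schur complement against $A_{00}=1$ and using the $V_i^{+}\leftrightarrow V_i^{-}$ symmetry to show the remaining diagonal entries are units and the off-diagonal contributions cancel, I expect the block determinant to collapse to this single entry, that is, to the sum displayed in the theorem. For $n=2$ this finishes part (1): the slopes are $0,1,2,3$ with no self-dual middle slope, so by duality the slope-$2$ and slope-$3$ factors merely repeat the slope-$1$ and slope-$0$ factors and introduce nothing new, and $h_p(\Delta_2)$ may be taken to be the sum itself.

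For part (2) the genuinely new feature at $n=3$ is the self-dual slope-$2$ side, of Hodge length $\binom{4}{2}=6$, which duality does \emph{not} relate to any lower slope. I would compute the incremental Schur-complement determinant at the vertex $k=2$, organizing the weight-$2$ lattice points into orbits under the natural symmetry group acting on $f$, namely the permutations of $x_1,x_2,x_3$ together with the involutions $x_i\mapsto 1/x_i$. Using this symmetry together with the self-duality to block-diagonalize, I expect the $6$-dimensional middle block to cut down to a $4\times4$ matrix with polynomial entries, whose determinant is exactly $T(\vec a)$. Assembling the factors, $h_p(\Delta_3)=\prod_{k=0}^{4}h_p(\Delta_3,k)$ equals, after discarding the unit factors at $k=0,4$ and the duplicated dual copy of the sum at $k=3$, the product $T(\vec a)\cdot\sum(\cdots)$, which is part (2).

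The hard part will be the explicit evaluation of the two key determinants. For the slope-$1$ block the nontrivial claim is that a $(2n+3)\times(2n+3)$ determinant collapses all the way to a single symmetric sum; verifying this requires the precise mod-$p$ and $\pi$-adic bookkeeping of the Artin--Hasse expansion and the exact pattern of cancellation forced by the pairing $V_i^{+}\leftrightarrow V_i^{-}$. For $n=3$ the main computational obstacle is the reduction of the $6\times6$ middle block to the $4\times4$ form $T(\vec a)$: one must enumerate the weight-$2$ lattice points, compute their Frobenius entries, and check that the symmetry orbits and self-duality produce exactly the off-diagonal vanishing needed to isolate a $4\times4$ determinant. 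The facial decomposition and the $\mathrm{HP}$/Frobenius duality organize and shorten this work, but they do not remove the core linear-algebra computation at the self-dual middle slope, which is where essentially all the difficulty of Theorem \ref{th213}(2) resides.
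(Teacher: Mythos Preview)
Theorem \ref{th213} is not proved in this paper; it is quoted from Zhang--Feng \cite{Zhang2013OnLO}. What the paper itself proves is Theorem \ref{m1} (the slope-$\le 1$ factor for all $n$) and the theorem after Lemma \ref{lm1} (that for $n=3$ one may take $h_p(\Delta_3)=h_p(\Delta_3,\le1)$, i.e.\ $T(\vec a)$ is superfluous). Your plan is essentially a reconstruction of Zhang--Feng's original argument rather than of this paper's, and on two points the paper's route is both different and shorter.

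For the slope-$\le 1$ factor you work directly with $\Delta_n$ and hope that the $(2n+3)\times(2n+3)$ block collapses via a Schur complement and unspecified ``off-diagonal cancellation'' coming from $V_i^{+}\leftrightarrow V_i^{-}$. The paper instead invokes the facial decomposition (Theorem \ref{th211}) to pass to $g=f^{\delta_0}$, which drops the $1/x_{n+1}$ term; on $\Delta'_n$ the block $A_{11}$ is genuinely lower triangular in a suitable ordering, so $\det A_{11}$ is simply the product of its diagonal entries, and only the entry at $V_{2n+1}=e_{n+1}$ is nontrivial. This bypasses the cancellation step you leave open.

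For the symmetry you appeal to a ``Poincar\'e-type duality of the Frobenius matrix $A_1(f)$'' which you do not establish. The paper obtains the symmetry of $\mathrm{NP}(f)$ by a different mechanism: Lemma \ref{lm1} (Denef--Loeser) gives $|\alpha_i|=q^{(n+1)/2}$ for every reciprocal root, so $L^*(-f,T)^{(-1)^n}=\prod(1-q^{n+1}\alpha_i^{-1}T)$, and together with $\mathrm{NP}(f)=\mathrm{NP}(-f)$ this forces the slope multiset to be invariant under $s\mapsto n+1-s$. Once you have this, the $n=3$ case finishes immediately: the endpoints and the vertex $(1,0)$ are always on $\mathrm{NP}(f)$, symmetry then forces $(15,28)$, and hence $\mathrm{NP}(f)=\mathrm{HP}(\Delta_3)$ iff $\mathrm{NP}(f)$ hits $(5,4)$, i.e.\ iff $h_p(\Delta_3,\le1)\ne0$. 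In particular the ``hard part'' you isolate---reducing a middle block to a $4\times4$ determinant $T(\vec a)$---is exactly the computation the paper is written to \emph{avoid}; its main theorem says that factor contributes nothing new to the Hasse locus.
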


\section{Proof of Main Theorems}
In this section, we give the proof of our main theorem. 
\begin{Th} \label{m1} For $n\geq2$. Let $f$ be a non-degenerate laurent polynomial defined as $$f(x_1,\ldots ,x_{n+1})=\sum_{i=1}^na_i x_{n+1}\left(x_i+\frac{1}{x_i}\right)+a_{n+1} x_{n+1}+\frac{1}{x_{n+1}}$$ 
where $a_i \in \F^*_{q}$, $i=1,2, \ldots, n+1$. For $\Delta_n=\Delta(f)$, we have
\begin{equation}\label{eq31}
h_p(\Delta_n,\leq1)(\vec{a})=\sum_{\substack{0 \leq v_1+\ldots +v_n \leq \frac{p-1}{2} \\ v_1, \ldots v_n \in \Z_{\geq0}}} \frac{a_1^{2v_1}a_2^{2v_2}\ldots a_n^{2v_n}a_{n+1}^{p-1-2(\sum_{i=1}^n v_i)}}{({v_1}!{v_2}!\ldots {v_n}!)^2(p-1-2(v_1+v_2+\ldots +v_n))!}
\end{equation}
where $\vec{a}=(a_1,\ldots, a_{n+1})\in \mathcal{M}_p(\Delta_n).$
\end{Th}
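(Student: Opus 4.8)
The plan is to read off the slope $\le 1$ Hasse polynomial directly from Dwork's semilinear Frobenius matrix. By Theorem \ref{th212} we have $D=1$, and one checks $W_{\Delta_n}(0)=1$ (the origin) and $W_{\Delta_n}(1)=2n+2$, the weight-$1$ lattice points being exactly the exponent vectors of the $2n+2$ monomials of $f$. Since $D=1$, "slope $\le 1$" refers to the vertices of $P(\Delta_n)$ at $k=0$ and $k=1$, so by the determinantal criterion \eqref{eq1} one has $h_p(\Delta_n,\le 1)\equiv h_p(\Delta_n,0)\,h_p(\Delta_n,1)$, where $h_p(\Delta_n,1)=\det\begin{pmatrix}A_{00}&A_{01}\\A_{10}&A_{11}\end{pmatrix}\bmod p$. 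The block $A_{00}$ is the $1\times1$ matrix $(F_0)=(1)$ attached to the origin, so $h_p(\Delta_n,0)=1$, and it remains to evaluate this $(2n+3)\times(2n+3)$ determinant modulo $p$.

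First I would kill the lower block. When $s$ is the origin, $w(r)-w(s)=1$, so by \eqref{eq4} every entry of $A_{10}$ carries a factor $\pi$ and hence has $\ord_p>0$; thus $A_{10}\equiv 0\pmod p$ and $h_p(\Delta_n,\le 1)\equiv \det A_{11}\pmod p$. Index the rows and columns of $A_{11}$ by the monomial exponents $Q=e_{n+1}$, $R=-e_{n+1}$, and $P_i^{\pm}=\pm e_i+e_{n+1}$ for $1\le i\le n$. By \eqref{eq4} the $(r,s)$ entry is, after the integral normalization, $F_{pV_s-V_r}$, which is a unit modulo $p$ precisely when $w(pV_s-V_r)=p-1$, with reduction equal to the leading Dwork coefficient $\sum_u \prod_j a_j^{u_j}/u_j!$ over solutions of $\sum_j u_j V_j=pV_s-V_r$ with $\sum_j u_j=p-1$.

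The heart of the argument is a weight computation. I would determine the facets of $\Delta_n$ — the single "top" facet $\{u_{n+1}=1\}$, a cross-polytope containing $Q$ in its interior, together with the $2^n$ unimodular "bottom" simplices $\mathrm{conv}(P_1^{\epsilon_1},\dots,P_n^{\epsilon_n},R)$ — which yields the closed form $w(u)=\max\!\bigl(u_{n+1},\,2\sum_{i=1}^n|u_i|-u_{n+1}\bigr)$. Evaluating $w(pV_s-V_r)$ case by case then shows that modulo $p$ every column of $A_{11}$ other than the one indexed by $Q$ has a single nonzero entry, lying on the diagonal, while the $Q$-column is nonzero only in the rows $Q,P_1^{\pm},\dots,P_n^{\pm}$. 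Hence the permutation expansion forces $\det A_{11}\equiv \prod_{V}A_{11}[V,V]\pmod p$, the product of all diagonal entries. Here $A_{11}[Q,Q]=F_{(p-1)Q}$ reduces to exactly the claimed sum: the solutions of $\sum u_j V_j=(p-1)e_{n+1}$ with $\sum u_j=p-1$ have $u_{P_i^+}=u_{P_i^-}=v_i$, $u_Q=p-1-2\sum v_i$, $u_R=0$, producing the factors $a_i^{2v_i}$, $(v_i!)^2$ and $a_{n+1}^{\,p-1-2\sum v_i}$. The remaining diagonal entries are $A_{11}[R,R]=1/(p-1)!$ and $A_{11}[P_i^{\pm},P_i^{\pm}]=a_i^{p-1}/(p-1)!$.

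Assembling these gives $\det A_{11}\equiv c\bigl(\prod_{i=1}^n a_i^{2(p-1)}\bigr)\,h_p(\Delta_n,\le 1)(\vec a)$ for a nonzero constant $c$. Since $a_1\cdots a_{n+1}\neq 0$ on $\mathcal M_p(\Delta_n)$, the factor $\prod_i a_i^{2(p-1)}$ never vanishes there and may be discarded, so the stated sum can indeed be taken as the Hasse polynomial of the slope $\le 1$ side. The main obstacle is the weight analysis of $A_{11}$: one must first pin down all facets, hence $w$, and then verify the exact sparsity pattern $w(pV_s-V_r)=p-1$, which is what collapses the large determinant to the single entry $F_{(p-1)Q}$. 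A secondary point needing care is the mod-$p$ reduction itself — one must confirm that $\ord_p F_r=w(r)/(p-1)$ is attained, that all exponents satisfy $u_j\le p-1$ so that $\lambda_{u_j}=1/u_j!$, and that the higher Artin–Hasse corrections do not contribute modulo $p$.
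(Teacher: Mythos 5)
Your proposal is correct, and its core is the same as the paper's: both reduce $h_p(\Delta_n,\le 1)$ to the determinant of the weight-$\le 1$ block of Dwork's Frobenius matrix, show that block is triangular modulo $p$, and identify the one nontrivial diagonal entry $F_{(p-1)e_{n+1}}$ (via the solutions $u_{P_i^{+}}=u_{P_i^{-}}=v_i$, $u_Q=p-1-2\sum v_i$) with the claimed sum, the other diagonal entries $a_i^{p-1}/(p-1)!$ being units on $\mathcal{M}_p(\Delta_n)$. The one structural difference is how triangularity is obtained. The paper first invokes Wan's facial decomposition theorem (Theorem \ref{th211}, together with Zhang--Feng's verification that only the top facet matters) to replace $f$ by its restriction $g=f^{\delta_0}$ to the facet $x_{n+1}=1$; this discards the monomial $1/x_{n+1}$, makes the relevant cone proper, and yields a $(2n+1)\times(2n+1)$ lower-triangular block $A_{11}$ with little further work. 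You instead keep the full polytope, determine all facets, derive the closed form $w(u)=\max\bigl(u_{n+1},\,2\sum_i|u_i|-u_{n+1}\bigr)$, and check case by case that $w(pV_s-V_r)>p-1$ off the admissible pattern, which collapses the $(2n+2)\times(2n+2)$ block to its diagonal (your extra diagonal entry $A_{11}[R,R]=1/(p-1)!$ is a unit, so the answer is unchanged). Your route is self-contained --- it needs neither the facial decomposition theorem nor the external check that the bottom facets are harmless --- at the price of the explicit facet and weight analysis; the paper's route is shorter but leans on those two imported facts. Your closing cautions are all resolved by your own setup: $\sum_j u_j=p-1$ forces every $u_j\le p-1$, so $\lambda_{u_j}=1/u_j!$ and no higher Artin--Hasse terms enter, and terms with $\sum_j u_j>p-1$ acquire positive valuation after dividing by $\pi^{p-1}$.
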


\begin{proof} 
If we restrict $f$ to the co-dimensional 1 face $\delta_0: x_{n+1}=1$, we will get the following new Laurent polynomial 
$$g=f^{\delta_0}=f(x_1,\ldots ,x_{n+1})=\sum_{i=1}^na_i x_{n+1}\left(x_i+\frac{1}{x_i}\right)+a_{n+1} x_{n+1}$$ 
From Theorem \ref{th211} and Zhang-Feng's calculation \cite{Zhang2013OnLO}, we know that $f$ is ordinary if and only if $g$ is ordinary, i.e.,  NP($f$) coincides with its lower bound at the $k$-th vertex if and only if NP($g$) coincides with its lower bound at the $k$-th vertex. 

Let $\Delta_n=\Delta(f)$ and $\Delta'_n=\Delta(g)$. To obtain $h_p(\Delta_n,\leq1)$, it suffices to compute $h_p(\Delta'_n,\leq1)$. From formula (\ref{eq3}), (\ref{eq4}) and (\ref{eq1}), 
$$h_p(\Delta'_n,\leq1)=h_p(\Delta'_n,1)=
\mathrm{det}\begin{pmatrix}
A_{00} &  A_{01} \\
A_{10} &  A_{11}
\end{pmatrix}\mod{p.}$$
where $A_{00}=1$ and $A_{10}=(0,0,\ldots,0)^T$. So $h_p(\Delta'_n,\leq1)=\mathrm{det}A_{11}\mod p$. Based on Zhang and Feng's calculation\cite{Zhang2013OnLO}, $W_{\Delta'_n}(1)=2n+1$. So $A_{11}$ is a matrix of size $(2n+1) \times (2n+1)$. For simplicity, we enumerate the vertices of $\Delta'_n$. Let $V_1=(1,0,0,\ldots,0,1)$, $V_2=(-1,0,0,\ldots,0,1)$, $V_3=(0,1,0,\ldots,0,1)$, \dots, $V_{2n}=(0,0,\ldots,0,-1,1)$, $V_{2n+1}=(0,0,\ldots,0,1)$. If the vertices are suitably arranged, $A_{11}$ is a lower triangular matrix of the following form,
$$A_{11}=\bordermatrix{
               &V_{2n+1}&V_1&V_2&\cdots\ &V_{2n} \cr
V_{2n+1}& \large{*}   & 0    &0    &\cdots\ &0          \cr
V_{1}&  \large{*}  &\large{*}    &0    &\cdots\ &0          \cr
V_{2}&  \large{*}  &\large{*}    &\large{*}    &\cdots\ &0          \cr
 \vdots & \vdots & \vdots & \vdots & \ddots  & \vdots  \cr
V_{2n}&\large{*} & \large{*} & \large{*} &\cdots\ &\large{*}          \cr}=(b_{ij})_{1\leq i,j \leq 2n+1}.$$

So $h_p(\Delta'_n, \leq 1)=\mathrm{det}A_{11}\mod p=\prod_{k=1}^{2n+1}b_{kk}\mod p$.
By formula (\ref{eq3}) and (\ref{eq4}), 
\begin{equation} \nonumber
\begin{split}
b_{11}&=\frac{1}{p}F_{(0,\ldots,0,p-1)}(g)\\
&=\sum_{\substack{0 \leq v_1+\ldots +v_n \leq \frac{p-1}{2} \\ v_1, v_2, \ldots v_n \in \Z_{\geq0}}}\frac{1}{(v_1!)^2}\cdots\frac{1}{(v_n!)^2}\frac{1}{(p-1-2(\sum_{i=1}^n v_i))!}a_1^{2v_1}\ldots a_n^{2v_n}a_{n+1}^{p-1-2(\sum_{i=1}^n v_i)}
\end{split}
\end{equation}
and 
\begin{equation}
b_{ii}=
\begin{cases}
\frac{1}{(p-1)!}a_k^{p-1} &\text{if} \ i=2k, \\ 
\frac{1}{(p-1)!}a_k^{p-1} &\text{if} \ i=2k+1,
\end{cases}
\end{equation} where $1\leq k \leq n$. Since $a_k\in \F^*_q$ and $\mathrm{ord}_p(\frac{1}{(p-1)!})=0$, we can conclude $b_{ii} (i\geq2$) are trivial factors for the Hasse polynomial.
So we proved the following equation,
\begin{equation}\nonumber
\begin{split}
h_p(\Delta_n,\leq1)&=h_p(\Delta'_n,\leq1)\\
&=\sum_{\substack{0 \leq \sum_{i=1}^n v_i \leq \frac{p-1}{2} \\ v_1, v_2, \ldots v_n \in \Z_{\geq0}}}\lambda^2_{v_1}\lambda^2_{v_2}\ldots \lambda^2_{v_n}\lambda_{p-1-2(\sum_{i=1}^n v_i)}a_1^{2v_1}a_2^{2v_2}\ldots a_n^{2v_n}a_{n+1}^{p-1-2(\sum_{i=1}^n v_i)}\\
&=\sum_{\substack{0 \leq \sum_{i=1}^n v_i \leq \frac{p-1}{2} \\ v_1, v_2, \ldots v_n \in \Z_{\geq0}}}\frac{1}{(v_1!)^2}\ldots\frac{1}{(v_n!)^2}\frac{a_1^{2v_1}a_2^{2v_2}\ldots a_n^{2v_n}a_{n+1}^{p-1-2(\sum_{i=1}^n v_i)}}{(p-1-2(v_1+\ldots+v_n))!}
\end{split}
\end{equation}
\end{proof}

In the second part of this section, we will give a formula of the Hasse polynomial $h_p(\Delta_3)$ which is much simpler compared to Zhang and Feng's result \cite{Zhang2013OnLO}, stated in Theorem \ref{th213}. Before proving the formula,  we introduce a lemma which will be used in the proof. This lemma follows from Denef-Loeser's weight formula \cite{Denef1991}.
\begin{Lemma}\label{lm1}
Let $f\in  \F_{q}[x_1^{\pm1},\ldots, x_n^{\pm1}]$ be a non-degenerate Laurent polynomial. Assume that the Newton polyhedron of $f$ is an $n$-dimensional polytope in $\mathbb{R}^n$, denoted by $\Delta$. If the origin is an interior point of $\Delta$, then the associated L-function is purely of weight $n$, i.e.,  
$$L^*(f,T)^{(-1)^{n-1}}=\prod^{n!\mathrm{Vol}(\Delta)}_{i=1}(1-\alpha_i T)$$ where $|\alpha_i|=q^{n/2}$, $i=0,1,\ldots, n!\mathrm{Vol}(\Delta)$. 
\end{Lemma}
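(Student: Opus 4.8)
The plan is to identify the reciprocal roots $\alpha_i$ with the Frobenius eigenvalues on a single cohomology group and then invoke the purity content of Denef--Loeser's weight formula. First I would recall that, since $f$ is non-degenerate and $\Delta$ is $n$-dimensional, the only non-vanishing compactly supported $\ell$-adic cohomology of the Artin--Schreier sheaf $\mathcal{L}_\psi(f)$ on the torus $\mathbb{G}_m^n$ sits in degree $n$, with $\dim H^n_c(\mathbb{G}_{m,\overline{\F}_q}^n,\mathcal{L}_\psi(f))=n!\mathrm{Vol}(\Delta)$; this is exactly Adolphson--Sperber's theorem quoted above, read cohomologically. Consequently $L^*(f,T)^{(-1)^{n-1}}=\det\bigl(1-T\,\mathrm{Frob}_q \mid H^n_c\bigr)$, so the $\alpha_i$ are precisely the geometric Frobenius eigenvalues on $H^n_c$.

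With this identification, the assertion $|\alpha_i|=q^{n/2}$ for every $i$ is equivalent to saying that $H^n_c$ is \emph{pure} of weight $n$. One inequality is free: Deligne's bound, in the sharpened Adolphson--Sperber form $|\alpha_i|\le q^{n/2}$ recorded earlier, shows that all weights are $\le n$. It therefore remains to establish the matching lower bound, namely that no $\alpha_i$ has weight strictly below $n$.

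For this I would apply Denef--Loeser's weight formula, which expresses each graded piece $\mathrm{Gr}^W_m H^n_c$ of the weight filtration as an explicit combinatorial sum indexed by the faces of $\Delta$. The terms producing weight $m<n$ come from the proper faces of $\Delta$ not passing through the origin, reflecting the degeneration of $f$ to lower-dimensional exponential sums along those faces. The decisive geometric input is that the origin lies in the interior of $\Delta$: this forces $\Delta$ to contain a full-dimensional neighbourhood of $0$, hence to meet every coordinate axis on both sides, so that $f$ is convenient. I would then show that under this hypothesis all the sub-top terms in Denef--Loeser's formula vanish, whence $\mathrm{Gr}^W_m H^n_c=0$ for $m<n$ and $H^n_c=\mathrm{Gr}^W_n H^n_c$ is pure of weight $n$. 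Since these pure classes already account for all $n!\mathrm{Vol}(\Delta)$ eigenvalues, every $\alpha_i$ has weight $n$, which is the claim.

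The main obstacle is the last step: correctly translating the interior-origin (convenience) hypothesis into the vanishing of the sub-top graded pieces appearing in Denef--Loeser's formula. This requires unwinding their combinatorial description of the weight filtration and verifying that the contribution of each proper face not containing the origin is absent precisely when $0\in\mathrm{int}(\Delta)$. A clean way to package this is to observe that convenience makes $f$ \emph{tame at infinity} along every boundary face, so the associated nearby-cycle/intersection-cohomology contributions concentrate in top weight, leaving $H^n_c$ pure.
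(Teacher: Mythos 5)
Your proposal is correct and follows essentially the same route as the paper, which simply quotes this statement as a consequence of Denef--Loeser's weight formula \cite{Denef1991} and gives no further argument. The step you single out as the main obstacle --- deducing the vanishing of the sub-top weight graded pieces of $H^n_c$ from the hypothesis that the origin is an interior point of $\Delta$ --- is itself part of Denef--Loeser's stated theorem (purity of $H^n_c$ of weight $n$ under exactly this hypothesis for non-degenerate $f$ with $\dim\Delta=n$), so it can be cited directly rather than re-derived from their combinatorial description of the weight filtration.
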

Then we have the following formula based on the symmetric property of the Newton polygons of L-functions derived from Lemma \ref{lm1}.
\begin{Th}
For $n=3$, let  $f(x_1, \ldots, x_4)\in \mathcal{F}$ be a non-degenerate Laurent polynomial with $\Delta_3=\Delta(f)$. A Hasse polynomial of $\Delta_3$ can be written as,
$$h_p(\Delta_3)(\vec{a})=h_p (\Delta_3,\leq1) (\vec{a})=\sum_{\substack{0 \leq v_1+v_2+v_3 \leq \frac{p-1}{2} \\ v_1, v_2, v_3 \in \Z_{\geq0}}} \frac{a_1^{2v_1}a_2^{2v_2}a_3^{2v_3}a_{4}^{p-1-2(\sum_{i=1}^3 v_i)}}{({v_1}!{v_2}!{v_3}!)^2(p-1-2(\sum_{i=1}^3 v_i))!}$$ 
where $\vec{a}=(a_1, a_2, a_3, a_4) \in \mathcal{M}_p(\Delta_3)$.\\
\indent \ \  In particular, NP($f$)=HP($\Delta_3$) if and only if $h_p(\Delta_3,\leq1)(\vec{a})\not\equiv 0(\mathrm{mod}\ p)$ where $\vec{a}$ is the vector of coefficients of $f$.
\end{Th}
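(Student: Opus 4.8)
The plan is to prove that the single non-vanishing condition $h_p(\Delta_3,\leq 1)(\vec a)\neq 0$ already forces NP($f$) to coincide with HP($\Delta_3$) along \emph{every} side, so that $h_p(\Delta_3)$ may be taken equal to $h_p(\Delta_3,\leq 1)$, whose closed form is supplied by equation (\ref{eq31}) specialized to $n=3$. The driving mechanism is a symmetry of NP($f$). First I would verify that the origin is an interior point of $\Delta_3$: every monomial of $f$ except $1/x_4$ lies in the hyperplane $x_4=1$, where the exponents $\pm e_i+e_4$ ($i=1,2,3$) and $e_4$ span a full-dimensional octahedron, while $1/x_4$ sits at the single point $x_4=-1$; averaging the two levels yields a full neighbourhood of the origin inside $\Delta_3$. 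Hence Lemma~\ref{lm1} applies with ambient dimension $N=n+1=4$, so $L^*(f,T)^{(-1)^n}$ has degree $16$ and is pure of weight $4$.

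Next I would extract the slope symmetry. In this pure, single-cohomology-degree situation Poincaré duality furnishes a functional equation for $L^*(f,T)$ whose effect is to pair the reciprocal roots by $\alpha\mapsto q^4/\alpha$; consequently the multiset of $q$-adic slopes $\{\lambda_i=\ord_q\alpha_i\}$ is invariant under $\lambda\mapsto 4-\lambda$, and NP($f$) is symmetric. It is convenient to record HP($\Delta_3$) by its vertices $(0,0),(1,0),(5,4),(11,16),(15,28),(16,32)$, with successive slopes $0,1,2,3,4$ of horizontal lengths $1,4,6,4,1$; under $\lambda\mapsto 4-\lambda$ the slope-$0$ and slope-$1$ portions are interchanged with the slope-$4$ and slope-$3$ portions, while the slope-$2$ portion is self-paired.

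The core argument is then short. Assume $h_p(\Delta_3,\leq 1)(\vec a)\neq 0$. By the characterization (\ref{eq1}) of the slope Hasse factors this means NP($f$) meets HP($\Delta_3$) on all sides of slope $\leq 1$, so NP($f$) passes through $(0,0),(1,0),(5,4)$ and its slope-$\leq 1$ part is exactly slope $0$ (length $1$) and slope $1$ (length $4$). Applying the symmetry $\lambda\mapsto 4-\lambda$, the slope-$\geq 3$ part of NP($f$) is exactly slope $3$ (length $4$) and slope $4$ (length $1$), so NP($f$) also passes through $(11,16),(15,28),(16,32)$ and agrees with HP($\Delta_3$) on $[11,16]$. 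For the middle slope-$2$ side on $[5,11]$ I would use a squeeze: since NP($f$) is convex it lies on or below the chord joining its vertices $(5,4)$ and $(11,16)$, which is precisely the slope-$2$ segment of HP($\Delta_3$); but the Adolphson--Sperber lower bound gives NP($f$)$\,\geq\,$HP($\Delta_3$), so NP($f$) is pinned to that segment. Thus NP($f$) $=$ HP($\Delta_3$) everywhere, i.e.\ $f$ is ordinary. The converse implication is immediate, which yields the stated equivalence NP($f$)$=$HP($\Delta_3$) $\iff h_p(\Delta_3,\leq 1)(\vec a)\neq 0$, and hence $h_p(\Delta_3)=h_p(\Delta_3,\leq 1)$ with the closed form of (\ref{eq31}).

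I expect the main obstacle to be the rigorous justification of the slope symmetry $\lambda\mapsto 4-\lambda$: one must pass from the purity statement of Lemma~\ref{lm1} to an actual functional equation for $L^*(f,T)$, which requires that the relevant twisted toric cohomology be concentrated in the single degree $N=4$ and self-dual under Poincaré duality. Once the symmetry is in place, the two supporting steps---the interior-point computation and the convexity/Hodge-bound squeeze on the middle side---are entirely routine.
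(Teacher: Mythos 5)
Your proposal follows the same architecture as the paper's proof: invoke Lemma~\ref{lm1} (the origin is indeed interior to $\Delta_3$, as you check) to get purity of weight $4$, deduce a symmetry of NP($f$) under $\lambda\mapsto 4-\lambda$, propagate the slope-$\le 1$ agreement with HP($\Delta_3$) to the slope-$\ge 3$ sides, and pin down the middle slope-$2$ side by convexity against the Adolphson--Sperber lower bound (the paper leaves this last squeeze implicit; your version of it is correct). The one point of divergence is precisely the step you flag as the main obstacle: you propose to derive the pairing $\alpha\mapsto q^4/\alpha$ from a Poincar\'e-duality functional equation for the twisted toric cohomology, and you do not carry that out. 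As written, this is a genuine gap, since the symmetry is the engine of the whole argument.

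The paper closes this gap with an elementary observation that avoids cohomology altogether. Since $\mathrm{Tr}_k(-f)=-\mathrm{Tr}_k(f)$, one has $S_k^*(-f)=\overline{S_k^*(f)}$ and hence $L^*(-f,T)=\overline{L^*(f,T)}$, so the reciprocal roots of $L^*(-f,T)^{(-1)^n}$ are the complex conjugates $\overline{\alpha_i}$. Purity gives $|\alpha_i|=q^2$, whence $\overline{\alpha_i}=q^4/\alpha_i$. Finally NP($-f$)$=$NP($f$), because the automorphism $\zeta_p\mapsto\zeta_p^{-1}$ of $\mathbb{Q}(\zeta_p)$ realizing the conjugation extends to the totally ramified extension $\mathbb{Q}_p(\zeta_p)/\mathbb{Q}_p$ and preserves the $p$-adic valuation. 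Therefore the multiset $\{\ord_q\alpha_i\}$ coincides with $\{\ord_q(q^4/\alpha_i)\}=\{4-\ord_q\alpha_i\}$, which is exactly the slope symmetry you need. If you substitute this conjugation argument for the Poincar\'e-duality appeal, your proof is complete and agrees with the paper's.
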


\begin{proof}
When $n=3$, the non-degenerate Laurent polynomial is given by $$f(x_1,x_2,x_3,x_4)=\sum_{i=1}^3a_i x_{4}\left(x_i+\frac{1}{x_i}\right)+a_{4}x_{4}+\frac{1}{x_4}.$$ where $a_i \in \mathbb{F}_q^*, i=1,2,3,4.$ From Theorem \ref{th212}, HP($\Delta$) has 6 vertices (0,0), (1,0), (5,4), (11,16), (15, 28) and (16, 32).\\
\begin{figure}[ht]
\centering
\includegraphics[scale=0.4]{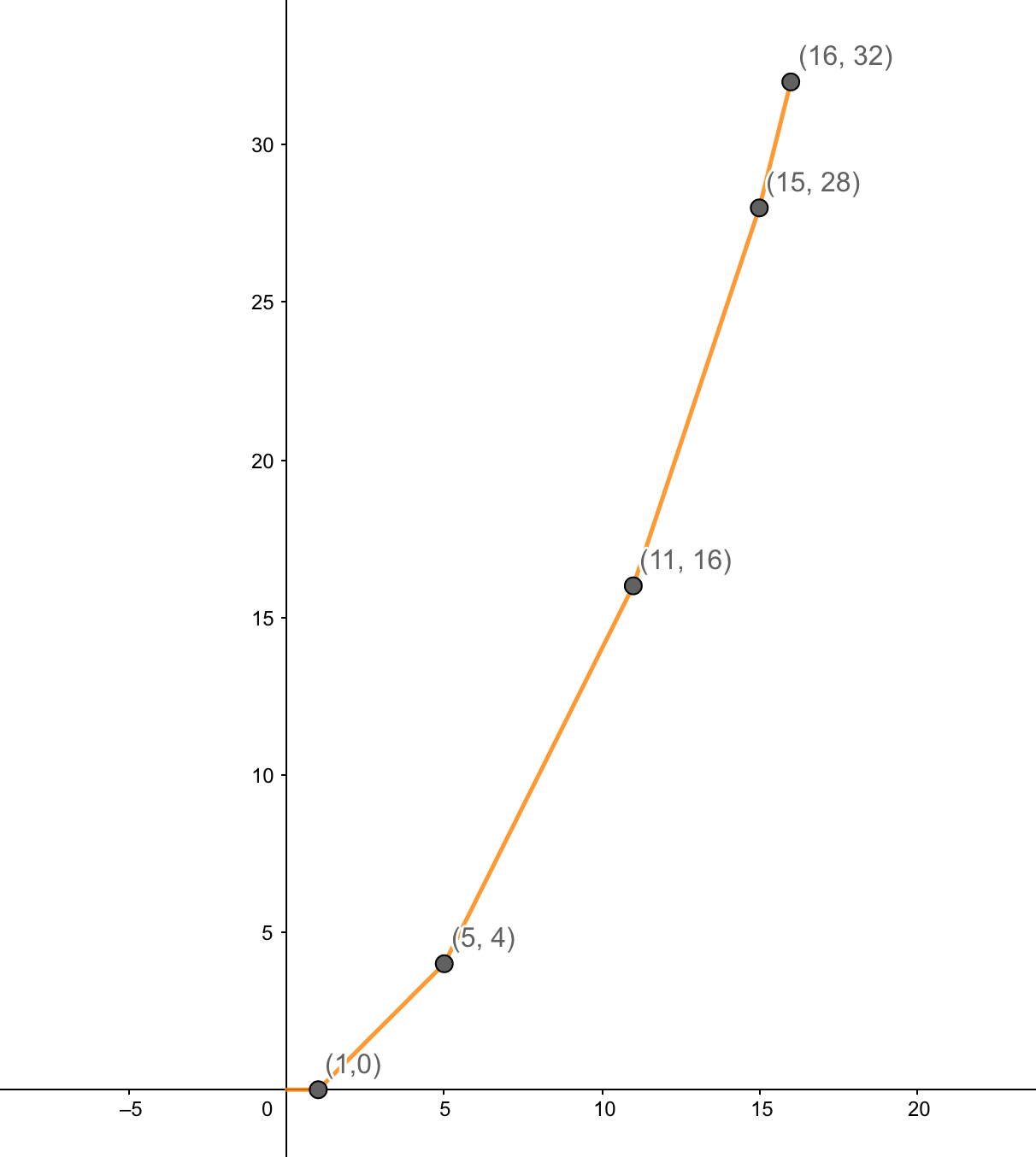}
\caption{Hodge polygon HP($\Delta_3$)}
\label{fig:label}
\end{figure}
\\
\indent \ \ Similar to the proof of Theorem \ref{m1}, let $g=\sum_{i=1}^3a_ix_{4}(x_i+\frac{1}{x_i})+a_{4}x_{4}$ and $\Delta'=\Delta(g)$. We know that NP($f$) coincides with HP($\Delta$) at the $k$-th vertex if and only if NP($g$) coincides with HP($\Delta'$) at the $k$-th vertex. In addition, $h_p(\Delta',0)=\mathrm{det}A_{00}=1\not \equiv 0 (\mathrm{mod} \ p)$ which shows that NP($g$) always pass through (1,0). So NP($f$) coincides with HP($\Delta$) at the first break point (1,0).\\ 
\indent \ \ We claim that NP($f$) is symmetric, i.e., if NP($f$) has a side of a slope $s$ with the horizontal length $l_s$, if and only if it has a side of slope $n+1-s$ with the same horizontal length $l_s$, $s=0,1,\ldots, n+1$. Recall that 
$$L^*(f,T)=\exp  (\sum^\infty _ {k=1} S^*_k (f) \frac{T^k}{k})$$ 
where  $S^*_k(f)=\sum_{x_i \in \F^*_{q^k}} \zeta^{ \mathrm{Tr_k}  f(x_1, \ldots, x_{n+1})}_p$. Then $$L^*(-f,T)=\exp  \left(\sum^\infty _ {k=1} S^*_k (-f) \frac{T^k}{k}\right)=\overline{L^{*}(f,T)}.$$ By Lemma 3.2, this family of L-functions is purely of weight $n+1$, i.e., 
$$L^*(f,T)^{(-1)^n}=\prod^{2^{n+1}}_{i=1}(1-\alpha_i T)$$ where $|\alpha_i|=q^{(n+1)/2}$. So we have 
$$L^*(-f,T)^{(-1)^n}=\prod^{2^{n+1}}_{i=1}(1-\overline{\alpha_i} T)=\prod^{2^{n+1}}_{i=1}\left(1-\frac{q^{n+1}}{\alpha_i} T\right).$$ 
From Lemma 1.3, we know that NP($f$) has a line segment of slope $s$ with horizontal length $l_s$ if and only if NP($-f$) has a line segment of slope $n+1-s$ with length $l_s$. In addition, it's easy to check that NP($f$)=NP($-f$). Consequently, we have the symmetric property: NP($f$) has a side of slope $s$ with the horizontal length $l_s$ if and only if it also has a side of slope $n+1-s$ with the same horizontal length $l_s$.

\indent \ \ Since NP($f$) is symmetric, NP($f$) coincides with HP($\Delta$) at the $k$-th vertex if and only if they coincide at the $(n+1-k)$-th vertex. For $n=3$, NP($f$) and HP($\Delta$) share the same end points (0,0) and (16,32). As proved in Theorem 3.1, we know that NP($f$) matches HP($\Delta$) at point (1,0). By the symmetric property of NP($f$), (15,28) is a also break point on NP($f$). So NP($f$)=HP($\Delta$) if and only if NP($f$) passes through (5,4) if and only if $h_p(\Delta,\leq1) \not \equiv 0(\mathrm{mod} \ p)$. 

\end{proof}

\section{Irreducibility of Hasse Polynomial and Open Problems}
In this section, we focus on the irreducibility of $h_p(\Delta_n,\leq1)$. Recall that a Laurent polynomial $f$ is defined as 
$$f(x_1,\ldots ,x_{n+1})=\sum_{i=1}^na_i x_{n+1}\left(x_i+\frac{1}{x_i}\right)+a_{n+1} x_{n+1}+\frac{1}{x_{n+1}}$$
 where $a_i \in \F^*_{q}$, $i=1,2, \ldots, n+1$. Let $\Delta_n=\Delta(f)$. From previous sections, we know its Hasse polynomial of slope one side $h_p(\Delta_n,\leq1)$ for $n\geq2$ is given by the following formula,
$$h_p(\Delta_n,\leq 1)=\sum_{\substack{0 \leq v_1+\ldots +v_n \leq \frac{p-1}{2} \\ v_1, \ldots v_n \in \Z_{\geq0}}} \frac{a_1^{2v_1}a_2^{2v_2}\ldots a_n^{2v_n}a_{n+1}^{p-1-2(\sum_{i=1}^n v_i)}}{({v_1}!{v_2}!\ldots {v_n}!)^2(p-1-2(\sum_{i=1}^n v_i)!}$$

To study the irreducibility of a polynomial, we first consider the following lemma.
\begin{Lemma}\label{lm41}Assume $n\geq 3$. Let $\mathbb{P}^n$ denote the projective $n$-space over $\bar{\mathbb{F}}_p$ and $H$ be zero locus of a homogeneous polynomial $h$ in $\mathbb{P}^n$, i.e., $H=Z(h)$. Let $\text{Sing}(H)$ denote the set of singular points of $H$. If dim(Sing($H$))$\leq n-3$, then $h$ is irreducible over $\bar{\mathbb{F}}_p.$
\end{Lemma}
\begin{proof}
We give a simple proof by contradiction. Assume $h$ is reducible and $h=h_1h_2$ where $h_1,h_2 \in  \bar{\mathbb{F}}_p[x_1,\ldots, x_{n+1}]$. To get Sing($H$), we compute 
$$\frac{\partial h}{\partial x_i}=\frac{\partial h_1}{\partial x_i}h_2+\frac{\partial h_2}{\partial x_i}h_1$$ where $i=1,2,\ldots, n+1$. So $Z(h_1,h_2)\subseteq \text{Sing}(H)$. Combining $\text{dim}(Z(h_1,h_2))\geq n-2$ and dim($\text{Sing}(H)$)$\leq n-3$, we have $n-2\leq \text{dim}(Z(h_1,h_2)) \leq n-3$ which leads to a contraction.
\end{proof}

Let's first compute $p=3,5$ and 7 as three simple examples.
\begin{Ex} For $p=3$, we have $h_3(\Delta_n,\leq1)=a_1^2+a_2^2+\ldots+a_n^2+\frac{1}{2}a_{n+1}^2$. Since $\text{Sing}(H)=\varnothing$ in $\mathbb{P}^n$, $h_3(\Delta_n,\leq1)=a_1^2+a_2^2+\ldots+a_n^2+\frac{1}{2}a_{n+1}^2$ is irreducible over $\overline{\mathbb{F}}_3$ for $n\geq 2$.
\end{Ex}
\begin{Ex} Suppose $p=5$ and $n\geq 2$. Then $h_5(\Delta_n,\leq1)$ is irreducible over $\overline{\mathbb{F}}_5$.
\begin{proof} 
When $4n+1 \not\equiv 0 \ \mathrm{mod} \ 5$, $\text{Sing}(H)$=$\varnothing$ in $\mathbb{P}^n$ which means $h_5(\Delta_n,\leq1)$ is irreducible. When $4n+1 \equiv 0 \ \mathrm{mod} \ 5$, we have $\text{Sing}(h_5(\Delta_n,\leq1))$ = \{$[\frac{2}{3}:\frac{2}{3}:\ldots:\frac{2}{3}:1]$\}. In this case, $n\geq 5$ and dim(Sing($H$))$= 0 \leq n-3$. By the previous lemma, $h_5(\Delta_n,\leq1)$ is irreducible $\overline{\mathbb{F}}_5$.
\end{proof}
\end{Ex}
\begin{Ex} For $p=7$ and $n\geq 3$, $h_7(\Delta_n,\leq1)$ is irreducible over $\overline{\mathbb{F}}_7$.
\end{Ex}

\begin{proof}
Since $h_7(\Delta_n,\leq1)$ is very complicated, we first consider $n=3$ and then generalize to $n\geq 3$. 
In $\mathbb{P}^3$, it's easy to check that $\text{Sing}(h_7(\Delta_3,\leq1))$=$\{[a_1:a_2:a_3:a_4] $ $| 2a_1^2=2a_2^2=2a_3^2=a_4^2 \}$ $\cup$ $\{[0:0:a_3:a_4]$|$ 4a_3^2=a_4^2 \}$ $\cup$ $\{[0:a_2:0:a_4] $ $| 4a_2^2=a_4^2 \}$ $\cup$ $\{[a_1:0:0:a_4] | 4a_1^2=a_4^2 \}$. So dim(Sing($H$))$= 0 \leq n-3$ for $n = 3$ which means $h_7(\Delta_3, \leq1)$ is irreducible over $\bar{\mathbb{F}}_7$. 

For $n\geq 3$, we prove by contradiction. Assume $n\geq 4$, then we have $h_7(\Delta_n,\leq1)\equiv \bar{h}_n(a_1,a_2, a_3, a_4) \ \text{mod} (a_5, \ldots, a_{n+1})$. If $h_7(\Delta_n,\leq1)$ is reducible over $\bar{\mathbb{F}}_7$, i.e., $h_7(\Delta_n,\leq1)=g_n l_n$, then $\bar{h}_n \equiv \bar{g}_n\bar{l}_n  \ \text{mod} (a_5, \ldots, a_{n+1})$ which means $\bar{h}_n$ is reducible. From formula (\ref{eq31}), $h_7(\Delta_n,\leq1)\equiv h_7(\Delta_3,\leq1) \ \text{mod} (a_5, \ldots, a_{n+1}).$ Since $h_7(\Delta_3,\leq1)$ is proved to be irreducible over $\bar{\mathbb{F}}_7$, this leads to a contradiction.
\end{proof}
Based on these examples, we give our hypothesis.

\begin{Conj} Assume $n\geq 3$. Let $p$ be an odd prime and $f$ be a Laurent polynomial in that family with $\Delta_n=\Delta(f)$. The Hasse polynomial of the slope one side $h_p(\Delta_n,\leq1)$ is irreducible over $\bar{\F}_p$. 
\end{Conj}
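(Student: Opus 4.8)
The plan is to prove the conjecture by induction on $n$, reducing everything to the single base case $n=3$ and treating the range $n\geq 4$ by a specialization argument. The starting observation is that $h_p(\Delta_n,\leq 1)$ is homogeneous of degree $p-1$ in $a_1,\ldots,a_{n+1}$: in (\ref{eq31}) each monomial has total degree $2\sum_i v_i+\bigl(p-1-2\sum_i v_i\bigr)=p-1$. Hence $H=Z(h_p(\Delta_n,\leq 1))\subset\mathbb{P}^n$ is a genuine projective hypersurface and, since the polynomial ring over $\bar{\F}_p$ is a graded domain, any factor of $h_p(\Delta_n,\leq 1)$ is again homogeneous.

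For the inductive step, assume $h_p(\Delta_3,\leq 1)$ is irreducible and let $n\geq 4$. Let $\phi$ be the specialization sending the intermediate coefficients $a_4,\ldots,a_n$ to $0$ and fixing $a_1,a_2,a_3,a_{n+1}$. By (\ref{eq31}) only the terms with $v_4=\cdots=v_n=0$ survive, so $\phi\bigl(h_p(\Delta_n,\leq 1)\bigr)=h_p(\Delta_3,\leq 1)$, where $a_{n+1}$ now plays the role of the fourth variable. Suppose $h_p(\Delta_n,\leq 1)=g\cdot l$ with $g,l$ homogeneous. Then $\phi(g)\phi(l)=h_p(\Delta_3,\leq 1)\neq 0$, so $\phi(g),\phi(l)\neq 0$; because a coordinate specialization carries a nonzero homogeneous polynomial to a homogeneous polynomial of the \emph{same} degree, $\deg\phi(g)=\deg g$ and $\deg\phi(l)=\deg l$. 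Irreducibility of $h_p(\Delta_3,\leq 1)$ forces one factor, say $\phi(l)$, to be a nonzero constant, so $\deg l=0$ and $l$ is a unit. Thus the factorization is trivial, and the whole conjecture reduces to the case $n=3$.

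For the base case I would invoke Lemma \ref{lm41}: it suffices to show $\dim\bigl(\mathrm{Sing}(H)\bigr)\leq 0=n-3$ for $H=Z(h_p(\Delta_3,\leq 1))\subset\mathbb{P}^3$. Since $\deg h_p(\Delta_3,\leq 1)=p-1\not\equiv 0\pmod p$, Euler's identity $(p-1)h=\sum_i a_i\,\partial_i h$ gives $\mathrm{Sing}(H)=Z(\partial_1 h,\partial_2 h,\partial_3 h,\partial_4 h)$, so the task is to prove this common zero locus is finite. Two structural features should make this accessible. First, $h_p(\Delta_3,\leq 1)$ is the degree-$(p-1)$ part of $B(a_1)B(a_2)B(a_3)\mathcal{E}(a_4)$, where $B(a)=\sum_v a^{2v}/(v!)^2$ and $\mathcal{E}(a)=\sum_w a^w/w!$; the ODEs $(aB')'=4aB$ and $\mathcal{E}'=\mathcal{E}$ satisfied over $\mathbb{Q}$ descend to relations among the graded pieces modulo $p$, expressing the $\partial_i h$ through neighbouring degree components and collapsing the four-equation system to a much smaller one. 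Second, the $S_3$-symmetry permuting $a_1,a_2,a_3$ (each entering only through $a_i^2$) should confine the singular points to the diagonal strata $a_i^2=a_j^2$ and to the coordinate subspaces, exactly as in the case $p=7$, where $\mathrm{Sing}(H)$ was a finite set of isolated points.

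The hard part will be carrying out this singular-locus analysis \emph{uniformly in $p$}. For small primes it is a finite verification, as in the examples for $p=3,5,7$, but for general $p$ the degree $p-1$ and the number of monomials grow without bound, so a case-by-case computation is hopeless; one must instead distill from the Bessel--exponential structure a $p$-independent description of the candidate singular points — plausibly of the shape $\alpha\,a_i^2=a_4^2$ for some $\alpha\in\F_p$ on the open torus, together with finitely many points on each coordinate stratum — and then show the associated resultants are nonzero modulo $p$ so that no positive-dimensional component can occur. Proving that this singular system is zero-dimensional for \emph{every} odd prime is the crux of the conjecture; a possible alternative, worth pursuing if the direct approach stalls, is to realize $H$ as a mod-$p$ deformation of a diagonal hypersurface and appeal to a smoothness/irreducibility criterion for such families, though the truncation inherent in reducing modulo $p$ makes this route delicate.
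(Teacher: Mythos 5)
The statement you are addressing is a conjecture that the paper itself leaves open: the authors prove irreducibility only for $p=3,5,7$ and remark that the general case reduces to checking $n=3$. Your reduction of $n\geq 4$ to $n=3$ is correct, and in fact slightly more careful than the paper's own version: the specialization you choose, $a_4,\ldots,a_n\mapsto 0$ with $a_{n+1}$ retained in the role of the fourth variable, really does send $h_p(\Delta_n,\leq 1)$ to $h_p(\Delta_3,\leq 1)$ (only the terms with $v_4=\cdots=v_n=0$ survive), whereas the paper's $p=7$ example reduces modulo $(a_5,\ldots,a_{n+1})$, which kills the variable carrying the factor $a_{n+1}^{p-1-2\sum v_i}$ and does not literally produce $h_7(\Delta_3,\leq 1)$. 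Your degree bookkeeping (homogeneity of degree $p-1$, homogeneity of the factors, preservation of degree under a coordinate specialization with nonzero image) makes the ``one factor is a unit'' conclusion airtight.

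However, the proposal is not a proof of the conjecture. The base case --- that $\mathrm{Sing}\bigl(Z(h_p(\Delta_3,\leq 1))\bigr)\subset\mathbb{P}^3$ is zero-dimensional for \emph{every} odd prime $p$, so that Lemma \ref{lm41} applies --- is precisely the open problem, and your own text concedes that the uniform-in-$p$ analysis of the singular system is ``the crux.'' The structural remarks about the Bessel and exponential generating functions and the $S_3$-symmetry are plausible heuristics, but they are not carried out: no argument is given that the degree-$(p-1)$ truncation of $B(a_1)B(a_2)B(a_3)\mathcal{E}(a_4)$ has finite singular locus modulo $p$, and the claimed confinement of singular points to diagonal and coordinate strata is asserted by analogy with the $p=7$ computation rather than derived. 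What you have is a correct reduction to $n=3$ (which the paper also records) together with a program for the base case; the conjecture remains unproved in your write-up, exactly as it does in the paper.
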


To prove the conjecture, it's sufficient to check if $h_p(\Delta_3,\leq1)(\vec{a})$ is irreducible over $\bar{\F}_p$. For the proving of the conjecture, we leave it as an open problem.

\nocite{*}
\bibliographystyle{amsalpha}
\bibliography{ref}

\end{document}